\theoremstyle{plain}
\newtheorem{theorem}{Theorem}[section]
\newtheorem*{claim*}{Claim}
\newtheorem{proposition}[theorem]{Proposition}
\newtheorem*{proposition*}{Proposition}
\newtheorem*{fact*}{Fact}
\newtheorem*{conjecture*}{Conjecture}
\newtheorem{corollary}[theorem]{Corollary}
\newtheorem{lemma}[theorem]{Lemma}
\newtheorem*{lemma*}{Lemma}
\newtheorem{question}{Question}
\newtheorem*{question*}{Question}
\theoremstyle{definition}\newtheorem{remark}[theorem]{Remark}
\theoremstyle{definition}\newtheorem*{remark*}{Remark}
\theoremstyle{definition}\newtheorem{definition}[theorem]{Definition}
\theoremstyle{definition}\newtheorem*{definition*}{Definition}
\theoremstyle{definition}
\theoremstyle{definition}
\theoremstyle{definition}\newtheorem{example}[theorem]{Example}
\theoremstyle{definition}\newtheorem*{example*}{Example}
\newcommand{\term}{\textbf} 
\renewcommand{\models}{\vDash} 
\newcommand{\notmodels}{\nvDash} 
\newcommand{\proves}{\vdash} 
\newcommand{\0}{\varnothing} 
\renewcommand{\bar}{\overline} 
\renewcommand{\phi}{\varphi} 
\renewcommand{\epsilon}{\varepsilon} 
\DeclareMathOperator{\uh}{\upharpoonright} 
\DeclareMathOperator{\Con}{\mathsf{Con}}
\def\N{\mathbb{N}} 
\def\Z{\mathbb{Z}} 
\def\Q{\mathbb{Q}} 
\def\Cantor{2^\omega} 
\def\PA{\mathsf{PA}}
\def\RCA{\mathsf{RCA}}
\def\ZF{\mathsf{ZF}}
\def\ZFC{\mathsf{ZFC}}
\let\lit\underline
\DeclareMathOperator{\Th}{Th}
\DeclareMathOperator{\Succ}{Succ}
\DeclareMathOperator{\Pred}{Pred}
\def\L{\mathcal{L}}
\DeclareMathOperator{\rad}{rad}
\newcommand{\unprime}[1]{#1^{\L' \to \L}}
\newcommand{\toprime}[1]{#1^{\L \to \L'}}
\newcommand{\tv}[2]{\llbracket #1 \rrbracket^{#2}}
\begin{document}
\title{A theory satisfying a strong version of Tennenbaum's theorem}
\author{Patrick Lutz}
\author{James Walsh}
\address{Department of Mathematics, University of California, Los Angeles}
\email{pglutz@math.ucla.edu}
\address{Department of Philosophy, New York University}
\email{jmw534@nyu.edu}

\begin{abstract}
We answer a question of Pakhomov by showing that there is a consistent, c.e.\ theory $T$ such that no theory which is definitionally equivalent to $T$ has a computable model. A key tool in our proof is the model-theoretic notion of mutual algebraicity. 
\end{abstract}
\maketitle
\thispagestyle{empty}


\section{Introduction}

Tennenbaum's theorem states that there is no computable nonstandard model of $\PA$~\cite{tennenbaum1959non}. Often, this result is viewed as giving us one reason the standard model of $\PA$ is special---it is the only computable model---but another perspective is possible: Tennenbaum's theorem is a source of examples of consistent, c.e.\ theories with no computable models.

To explain this perspective, let us say that a theory $T$ has the \term{Tennenbaum property} if $T$ has no computable models. Tennenbaum's theorem implies that there are many consistent extensions of $\PA$ with the Tennenbaum property. For example, the theory $\PA + \lnot \Con(\PA)$ (which asserts that $\PA$ is inconsistent) is a consistent extension of $\PA$ with only nonstandard models and hence, by Tennenbaum's theorem, with no computable models. Furthermore, a slight extension of the proof of Tennenbaum's theorem can be used to prove that many other theories have the Tennenbaum property. For example, it is not hard to show that $\ZFC$ has no computable models~\cite{hamkins2010computable} and likewise for much weaker theories like $\mathsf{Z}_2$ (the theory of full second order arithmetic), or even $\RCA_0$ (at least if ``model'' is understood in the usual sense of first order logic). More generally, it seems to be an empirical fact that every natural theory which interprets even a small fragment of second order arithmetic has the Tennenbaum property.

Recently, however, Pakhomov showed that this phenomenon is somewhat fragile: it depends on the specific language in which the theory is presented~\cite{pakhomov2022escape}. To make this idea precise, Pakhomov used the notion of \term{definitional equivalence} (also known as \term{synonymy}), a strong form of bi-interpretability introduced by de Bouv\'ere in~\cite{debouvere1965logical}. Roughly speaking, theories $T$ and $T'$ in languages $\L$ and $\L'$ are definitionally equivalent if they can be viewed as two instances of a single theory, but with different choices of which notions to take as primitive.

\begin{theorem}[Pakhomov]
There is a theory $T$ which is definitionally equivalent to $\PA$ such that any consistent, c.e.\ extension of $T$ has a computable model.
\end{theorem}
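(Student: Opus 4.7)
The plan is to choose a language $\L'$ and theory $T$ in $\L'$ so that $T$ is definitionally equivalent to $\PA$ but the primitive symbols of $\L'$ are ``computationally local''---each can be realized by computable functions on a presentation of any nonstandard model of $T$. The key insight is that Tennenbaum's theorem prevents the standard arithmetic operations $+$ and $\cdot$ from being simultaneously computable on a nonstandard model, but this does not preclude computability of the primitive $\L'$-symbols, provided the first-order definitions of $+, \cdot$ from those primitives necessarily involve unbounded quantifiers.

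A natural choice is $\L' = \{0, S, h, d, b\}$, with $S$ the successor, $h$ a ``halving'' function, $d$ a ``doubling'' function, and $b$ a last-bit predicate. The theory $T$ would consist of natural axioms relating these primitives (e.g., $d(h(x)) = x$ when $b(x) = 0$, and $S(d(h(x))) = x$ when $b(x) = 1$) together with enough induction to support recursive definitions. The first step is to verify the definitional equivalence: the $\L \to \L'$ translation is elementary since each primitive of $\L'$ is straightforwardly $\PA$-definable, while the reverse direction requires showing that $T$ supports first-order definitions of $+$ and $\cdot$, via a $\beta$-function-style encoding of recursion over binary expansions (which involves unbounded quantifiers).

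The heart of the proof is to show that every consistent c.e.\ extension $T^+$ of $T$ has a computable $\L'$-model. The plan is to produce such a model by a stage-by-stage enumeration: begin with a countable model $M$ of $T^+$ and inductively assign natural number indices to elements of $M$, extending the domain at each stage by closing under the primitive operations $S, h, d, b$. Because each primitive acts ``locally''---producing an element whose index can be computably controlled in terms of the input's index---the resulting presentation is computable by construction. The resulting structure $N$ satisfies $T^+$ and is computable in $\L'$, while the defined operations $+_N$ and $\cdot_N$ remain noncomputable because their defining formulas cannot be evaluated by any finitary procedure on nonstandard elements.

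The main obstacle will be ensuring that this Henkin-style construction succeeds for every c.e.\ extension $T^+$, including pathological ones such as $T + \tau(\lnot\Con(\PA))$ whose corresponding $\L$-theory admits no computable models. This likely requires a priority or forcing argument to guarantee that the enumeration produces a model of $T^+$ while maintaining computability of the primitive operations---an argument in spirit analogous to those used to produce computable copies of recursively saturated models of c.e.\ theories.
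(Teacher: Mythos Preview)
First, note that the paper does not actually prove this theorem; it is cited as a result of Pakhomov from~\cite{pakhomov2022escape} and serves only as background motivation for the paper's own main result. So there is no ``paper's proof'' to compare against.

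That said, your proposal has a concrete flaw in the choice of primitives. You suggest $\L' = \{0, S, h, d, b\}$ with $h$ the halving function and $b$ the last-bit predicate, and claim these are ``computationally local.'' They are not harmless enough. If $h$ and $b$ were both computable on some presentation of a nonstandard model $M$, then for every element $a \in M$ the sequence $n \mapsto b(h^n(a))$ would be computable; but this sequence is exactly the standard part of the binary expansion of $a$, so every set in the standard system $\mathrm{SSy}(M)$ would be computable. Since the standard system of any nonstandard model of $\PA$ is a Scott set, it contains noncomputable sets, a contradiction. This is the same obstruction as in Tennenbaum's theorem, merely routed through binary expansion instead of prime coding, so your $\L'$ cannot work for any extension of $T$ that has only nonstandard models (e.g.\ the $\L'$-translation of $\PA + \lnot\Con(\PA)$).

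Beyond this, the sketch never identifies a property of the primitives that would make the Henkin-style construction produce a computable model, and your final paragraph explicitly concedes that the main obstacle is unresolved and ``likely requires a priority or forcing argument.'' Finding primitives that genuinely evade the Tennenbaum obstruction is the entire content of Pakhomov's theorem; his actual construction uses a rather different idea from the one you outline.
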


This theorem implies that every consistent, c.e.\ extension of $\PA$ is definitionally equivalent to a theory with a computable model. Moreover, the techniques used by Pakhomov are not restricted to extensions of $\PA$. For example, Pakhomov notes that they are sufficient to prove that $\ZF$ is definitionally equivalent to a theory with a computable model. More generally, Pakhomov's techniques seem sufficient to prove that each example we have given so far of a theory with the Tennenbaum property is definitionally equivalent to a theory without the Tennenbaum property.

In light of these observations, Pakhomov asked how general this phenomenon is~\cite{pakhomov2022escape}. In particular, does it hold for every consistent, c.e.\ theory?

\begin{question}[Pakhomov]
Is every consistent, c.e.\ theory definitionally equivalent to a theory with a computable model?
\end{question}

The purpose of this paper is to answer this question in the negative. In other words, to give an example of a consistent, c.e.\ theory which satisfies a strong version of the Tennenbaum property.

\begin{theorem}
\label{thm:main}
There is a consistent, c.e.\ theory $T$ such that no theory which is definitionally equivalent to $T$ has a computable model.
\end{theorem}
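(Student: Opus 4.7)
My approach is to construct a c.e.\ theory $T$ whose models are mutually algebraic structures rigidly encoding a fixed non-computable c.e.\ set, and then use the interaction between mutual algebraicity and definitional equivalence to rule out computable models of any theory definitionally equivalent to $T$.

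Fix a non-computable c.e.\ set $A \subseteq \omega$. I first design a finite relational signature $\L$ --- for concreteness, say a single symmetric binary relation $E$ --- together with a c.e.\ axiomatization $T$ asserting (i) that $E$ is a graph of bounded degree, so that $E$ is mutually algebraic and indeed the whole theory $T$ is mutually algebraic in the sense of Laskowski, and (ii) that the graph $(M, E)$ contains, up to isomorphism, a specific uniformly definable family of finite ``gadgets'' from which membership in $A$ can be uniformly decoded. Handling the c.e.\ encoding is a standard but delicate bookkeeping task: the gadgets are arranged so that any model of $T$ supports a uniform decoding of $A$, and the axioms are written so as to remain c.e.\ despite $A$ not being co-c.e. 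From this it is immediate that $T$ has no computable model, since any computable presentation of the primitives would yield a computable procedure deciding $A$, contradicting the choice of $A$.

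The heart of the argument is then the passage from $T$ to an arbitrary definitionally equivalent theory $T'$ in a signature $\L'$. Given a putative computable $\L'$-model $M'$, let $M$ be the $\L$-reduct on the same underlying set obtained by unfolding the definitional equivalence. The $\L$-formula $E(x,y)$ corresponds to some $\L'$-formula $\psi(x,y)$, which a priori may have unbounded quantifier complexity --- and this is exactly the loophole Pakhomov exploits in his positive result. To close it, I use that mutual algebraicity is preserved under definitional equivalence, so that $\psi(x,y)$ defines a mutually algebraic relation in $T'$. Combining this with the structure theory of mutually algebraic formulas --- which expresses any mutually algebraic relation as a boolean combination of ``finite-witness'' formulas evaluable by bounded search in finitely many auxiliary mutually algebraic predicates --- I expect to conclude that $\psi$ admits a uniform computable evaluation on any computable $\L'$-model. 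Thus $E^M$ is computable, contradicting the second step.

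The main obstacle is formulating and justifying the structural reduction in the third paragraph with enough precision to make the computability argument go through: I need to show that the ``bounded search'' characterization of mutually algebraic relations survives the translation across a definitional equivalence and yields not merely arithmetic but genuinely computable evaluation in any computable $\L'$-model. This is where the model-theoretic theory of mutually algebraic structures --- in particular results identifying mutually algebraic relations with unions of finitely many translates of a fixed finite template --- is expected to do the essential work; the rest of the proof, including the initial construction of $T$, is largely bookkeeping.
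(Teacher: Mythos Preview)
Your high-level strategy---build a mutually algebraic theory encoding something non-computable, and exploit that mutual algebraicity is preserved under definitional equivalence---matches the paper's. But the third paragraph contains a real gap, and it is exactly the crux of the problem.

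You write that you ``expect to conclude that $\psi$ admits a uniform computable evaluation on any computable $\L'$-model.'' This expectation is too strong, and the paper's proof is largely devoted to working around its failure. Laskowski's structure theorem for mutually algebraic formulas says that every formula is equivalent, over $M$, to a Boolean combination of \emph{existential} mutually algebraic formulas (possibly with parameters). An existential formula is c.e.\ in a computable model, but its negation is only co-c.e., so a Boolean combination of existentials is in general not decidable. Thus the $\L'$-translation of your edge relation $E$ need not be computable in a computable $\L'$-model; you only get a kind of finite-valued c.e.\ approximation. Concretely, the paper applies its Theorem~\ref{thm:qe1} to the successor relation to obtain an existential mutually algebraic $\psi_S$ with $M \models (S(x)=y) \to \psi_S(x,y)$---an over-approximation, not an equivalence---and this is the best one can extract.

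Because of this, encoding a merely non-computable c.e.\ set $A$ is not enough: a computable $\L'$-model might let you approximate the encoded data without deciding it, and no contradiction follows. The paper's fix is to encode something that cannot even be \emph{approximated}: a path through a computable tree $R$ none of whose paths is ``guessable'' (for each $n$, one cannot enumerate $O(n^2)$ candidates for the length-$n$ initial segment). The paper then builds a substantial ``satisfaction algorithm'' and ``guessing algorithm'' to show that from any computable $\L'$-model one can enumerate $O(n^2)$ guesses for the first $n$ bits of the encoded path, contradicting the choice of $R$. Your proposal is missing both ingredients: the strengthening of the encoded object from non-computable to non-guessable, and the argument that the approximation one actually gets from mutual algebraicity is tight enough to yield guessability. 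Without these, the step ``thus $E^M$ is computable'' does not go through.
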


To prove this theorem, we construct a theory $T$ which has no computable models but is also model-theoretically tame. A key observation in our proof is that if a theory $T$ is sufficiently tame then any theory definitionally equivalent to $T$ must also be fairly tame. In particular, if $T$ is sufficiently tame then every theory which is definitionally equivalent to $T$ satisfies a weak form of quantifier elimination. 

Here's why this is useful. Suppose that $M$ is a model of a theory $T'$ which is definitionally equivalent to $T$. It follows from the definition of ``definitionally equivalent'' that within $M$, we can define a model of $T$. If $T'$ had quantifier elimination then we could assume that this definition is quantifier free and thus $M$ can compute a model of $T$. Since $T$ has no computable models, this would imply that $M$ itself is not computable. Unfortunately, we can't quite follow this strategy: we don't know that $T'$ has full quantifier elimination, but only a weak version of it. However, using this weak form of quantifier elimination we can show that $M$ can computably approximate a model of $T$ and, by picking $T$ so that its models cannot even be computably approximated, this is enough to show that $M$ is not computable.


The specific form of model-theoretic tameness that we use in our proof is known as \term{mutual algebraicity}, first defined in~\cite{goncharov2003trivial} and subsequently developed by Laskowski and collaborators (e.g.~\cite{laskowski2013mutually, laskowski2015characterizing,braunfeld2022mutual}). The main result we need from the theory of mutual algebraicity is a quantifier elimination theorem proved by Laskowski in~\cite{laskowski2009elementary}.

Our use of tame model theory in this paper is somewhat reminiscent of techniques used by Emil Je\v{r}\'{a}bek in the paper~\cite{jerabek2020recursive}. In that paper, Je\v{r}\'{a}bek separated two conditions which imply that a theory $T$ is essentially undecideable: the condition that $T$ can represent all partially recursive functions and the condition that $T$ interprets Robinson's $R$. To accomplish this, he used the fact that the model completion of the empty theory in an arbitrary language is model-theoretically tame---in particular, it eliminates $\exists^\infty$ and is $\mathsf{NSOP}$. He ended the paper by asking whether there are more connections between formal arithmetic and tame model theory. We believe our results constitute a partial answer to his question.

\subsection*{Acknowledgements} We thank Peter Cholak, Nick Ramsey, Charlie McCoy, Andrew Marks, Forte Shinko, Mariana Vicaria and Kyle Gannon for helpful conversations, James Hanson for pointing us to the literature on mutual algebraicity and Chris Laskowski for help in understanding that literature.

\section{Preliminaries on definitional equivalence and mutual algebraicity}

In this section we will give the formal definition of definitional equivalence, fix some notation related to it and review the facts about mutual algebraicity that we need.

\subsection{Definitional equivalence}
\label{sec:de}

To define definitional equivalence, we first need the concept of a definitional extension of a theory.

\begin{definition}
Given a theory $T$ in language $\L$, a \term{definitional extension} of $T$ is a theory $T' \supseteq T$ in a language $\L' \supseteq \L$ such that
\begin{enumerate}
    \item \textbf{\boldmath$T'$ is conservative over \boldmath$T$:} for each sentence $\phi \in \L$, $T'\proves \phi$ if and only if $T \proves \phi$.
    \item \textbf{The symbols in \boldmath$\L'$ are definable in \boldmath$\L$:} for each constant symbol $c$, relation symbol $R$ and function symbol $f$ in $\L'$, there is a corresponding formula $\phi_c$, $\phi_R$, or $\phi_f$ in $\L$ such that
    \begin{align*}
        T' &\proves \forall x\, (x = c \leftrightarrow \phi_c(x))\\
        T' &\proves \forall \bar{x}\, (R(\bar{x}) \leftrightarrow \phi_R(\bar{x}))\\
        T' &\proves \forall \bar{x}, y\, (f(\bar{x}) = y \leftrightarrow \phi_f(\bar{x}, y)).
    \end{align*}
\end{enumerate}
\end{definition}

\begin{definition}
Theories $T$ and $T'$ in disjoint signatures are \term{definitionally equivalent} if there is a single theory which is a definitional extension of both $T$ and $T'$.
\end{definition}

More generally, theories $T$ and $T'$ are definitionally equivalent if they are definitionally equivalent after renaming their symbols to make their signatures disjoint. However, there is no loss of generality from ignoring theories with overlapping signatures, so we will do that for the rest of this paper.

\begin{example}
The theories of the integers with plus and with minus---i.e.\ $T = \Th(\Z, +)$ and $T' = \Th(\Z, -)$---are definitionally equivalent because plus and minus can both be defined in terms of the other. More formally, the theory $T'' = \Th(\Z, +, -)$ is a definitional extension of both $T$ and $T'$. In contrast, it is well-known that the theories $\Th(\Z, +)$ and $\Th(\Z, \times)$ are \emph{not} definitionally equivalent, because neither plus nor times can be defined in terms of the other.
\end{example}

A key point about definitional equivalence is that if $T$ and $T'$ are definitionally equivalent theories in languages $\L$ and $\L'$, respectively, then every model of $T$ can be viewed as a model of $T'$ and vice-versa. Likewise, every $\L$-formula can be viewed as an $\L'$-formula and vice-versa. It will be useful to us to make this idea precise and to fix some notation.

\medskip\noindent\textbf{Translating models.}
Suppose that $T$ and $T'$ are definitionally equivalent theories in languages $\L$ and $\L'$, respectively. Let $T''$ be an $\L''$-theory witnessing the definitional equivalence of $T$ and $T'$---i.e.\ $\L\cup \L' \subseteq \L''$ and $T''$ is a definitional extension of both $T$ and $T'$. 

Suppose that $R$ is a relation symbol in $\L$. Since $T''$ is a definitional extension of $T'$, there is an $\L'$-formula, $\phi_R$, which $T''$ proves is equivalent $R$. We will refer to this formula as the \term{\boldmath$\L'$-definition of \boldmath$R$}. Similarly, every other constant, relation and function symbol of $\L$ has an $\L'$-definition and vice-versa. 

Given a model $M$ of $T'$, we can turn $M$ into an $\L$-structure by interpreting each constant, relation and function symbol of $\L$ according to its $\L'$-definition.\footnote{Technically this requires checking that for every function symbol $f$ in $\L$, $T'$ proves that the $\L'$-definition of $f$ is a function and that for every constant symbol $c$ of $\L$, $T'$ proves that the $\L'$-definition of $c$ holds of exactly one element. These both follow from conservativity of $T''$.} Furthermore, it is not hard to check that the resulting $\L$-structure is always a model of $T$. We will denote the model produced in this way by $\unprime{M}$. Likewise, if $M$ is a model of $T$ then we can transform it into a model of $T'$, which we will denote $\toprime{M}$. 

It is important to note that for any model $M$ of $T'$, $M$ and $\unprime{M}$ have the same underlying set and $\toprime{(\unprime{M})} = M$. Thus we may think of $M$ and $\unprime{M}$ as two different ways of viewing the same structure.

\medskip\noindent\textbf{Translating formulas.}
A similar transformation is possible for formulas. Suppose $\phi$ is an $\L$-formula. Then by replacing each constant, relation and function symbol in $\phi$ by the corresponding $\L'$-definition, we obtain an $\L'$-formula, which we will denote $\toprime{\phi}$. Likewise we can transform any $\L'$-formula $\phi$ into an $\L$-formula, which we will denote $\unprime{\phi}$.

\begin{example}
Suppose $f$ is a unary relation symbol in $\L$, $\phi_f(x, y)$ is its $\L'$-definition and $\psi$ is the $\L$-formula $\forall x, y\, (f(f(x)) = f(y))$. Then $\toprime{\psi}$ is the formula $\forall x, y\, (\exists z_1, z_2, z_3\, (\phi_f(x, z_1) \land \phi_f(z_1, z_2) \land \phi_f(y, z_3) \land z_2 = z_3))$. 
\end{example}

It is not hard to check that our translations of models and of formulas are compatible with each other. In particular, if $M$ is a model of $T'$, $\phi$ is an $\L'$-formula and $\bar{a}$ is a tuple in $M$ then $M \models \phi(\bar{a})$ if and only if $\unprime{M} \models \unprime{\phi}(\bar{a})$. Note that this implies that $M$ and $\unprime{M}$ have the same algebra of definable sets.

\subsection{Mutual algebraicity}

As mentioned in the introduction, we will use the model-theoretic notion of mutual algebraicity. The key definitions are of mutually algebraic formulas and mutually algebraic structures.

\begin{definition}
Given a structure $M$, a formula $\phi(\bar{x})$ with parameters from $M$ is \term{mutually algebraic over \boldmath$M$} if there is some number $k \in \N$ such that for every nontrivial partition $\bar{x} = \bar{x}_0 \cup \bar{x}_1$ and every tuple $\bar{a}_0$ in $M$, there are at most $k$ tuples $\bar{a}_1$ such that $M \models \phi(\bar{a}_0, \bar{a}_1)$.
\end{definition}

Note that the mutual algebraicity depends on what the free variables of the formula are. In particular, it is not preserved by adding dummy variables. Also note that any formula with at most one free variable is mutually algebraic.

\begin{example}
If $M$ is the structure $(\N, +)$ then the formula $x = y + 5$ is mutually algebraic over $M$ because if we fix $x$ there is at most one $y$ satisfying the formula, and vice-versa. On the other hand, the formula $x = y + z + 5$ is not mutually algebraic over $M$ because when we fix $z$ there are infinitely many pairs $x, y$ which satisfy the formula.
\end{example}

\begin{definition}
A structure $M$ is \term{mutually algebraic} if every formula is equivalent to a Boolean combination of formulas which are mutually algebraic over $M$ (and which are allowed to have parameters from $M$).
\end{definition}

\begin{example}
The structure $(\N, \Succ)$ of natural numbers with the successor function has quantifier elimination and thus every formula is equivalent to a Boolean combination of atomic formulas. It is easy to check that the atomic formulas are all mutually algebraic and thus that the structure itself is. In contrast, it is possible to show that the structure $(\Q, \leq)$, despite having quantifier elimination, is not mutually algebraic (for example, one can show that the formula $x \leq y$ is not equivalent to a Boolean combination of mutually algebraic formulas).
\end{example}



\subsection{Quantifier elimination for mutually algebraic structures}
\label{sec:qe_ma}

We will make use of two quantifier elimination theorems for mutually algebraic structures. The first is due to Laskowski.

\begin{theorem}[\cite{laskowski2009elementary}, Theorem 4.2]
\label{thm:qe2}
If $M$ is mutually algebraic then every formula $\phi(\bar{x})$ is equivalent over $M$ to a Boolean combination of formulas of the form $\exists \bar{z}\, \theta(\bar{y}, \bar{z})$ (which may have parameters from $M$) where $\theta$ is quantifier free and mutually algebraic over $M$ and $\bar{y}$ is a subset of $\bar{x}$.
\end{theorem}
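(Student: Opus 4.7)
The plan is to prove the theorem by induction on the complexity of the formula $\phi(\bar{x})$. Call a formula \emph{good} if it is a Boolean combination (BC) of formulas of the form $\exists \bar{z}\, \theta(\bar{y}, \bar{z})$ with $\theta$ quantifier-free and mutually algebraic (QF MA) over $M$ and $\bar{y}$ contained in the intended free variables.

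The base case --- showing every quantifier-free formula is good --- is itself nontrivial. By definition of mutual algebraicity, a QF formula is equivalent to a BC of MA formulas, but those MA formulas may carry quantifiers. So I would first establish an auxiliary reduction: every QF formula $\alpha(\bar{x})$ is equivalent over $M$ to a BC of QF MA formulas. I would attempt this by induction on the atomic structure of $\alpha$, applying the MA property and then stripping quantifiers from the resulting MA components by appealing to their bounded-witness combinatorics and to earlier lemmas from Laskowski's paper.

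For Boolean connectives, goodness is preserved for free. The substantive inductive case is $\phi(\bar{x}) = \exists y\, \psi(\bar{x}, y)$. Given that $\psi$ is good, I would put it in disjunctive normal form and distribute $\exists y$ over the disjunction. Within each conjunctive clause, conjuncts not mentioning $y$ pull out of $\exists y$. Among the positive existential conjuncts $\exists \bar{z}_i\, \theta_i(\bar{v}_i, \bar{z}_i)$ involving $y$, unify their quantifier blocks together with $y$ into a single block $\exists y\, \exists \bar{z}_1 \cdots (\theta_1 \wedge \cdots \wedge \theta_k)$; the conjunction $\theta_1 \wedge \cdots \wedge \theta_k$ remains QF, and one verifies it remains MA directly from the bounded-partition property of each $\theta_i$.

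The main obstacle is handling negated conjuncts $\neg \exists \bar{z}\, \theta(\bar{x}, y, \bar{z})$ sitting under the outer $\exists y$. Here I would exploit that MA of $\theta$ bounds, for each fixed $\bar{x}$, the number of $y$'s satisfying $\exists \bar{z}\, \theta(\bar{x}, y, \bar{z})$, so the existential choice of $y$ decomposes into a finite case analysis whose cases are individually expressible in the desired form. Organizing this bookkeeping --- keeping the QF kernels MA and keeping stray variables outside $\bar{x}$ from becoming free --- is where I expect the bulk of the technical work to live, and where I would rely on the combinatorial lemmas about MA formulas developed earlier in the paper.
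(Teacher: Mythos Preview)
The paper does not prove this theorem: it is quoted as Theorem~4.2 of \cite{laskowski2009elementary} and used as a black box (to derive Theorem~\ref{thm:qe1} and Proposition~\ref{prop:satisfaction}). So there is no in-paper proof to compare your proposal against.

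That said, your sketch has a genuine gap at the point you flag as routine. You write that after collecting the positive existential conjuncts involving $y$, ``the conjunction $\theta_1 \wedge \cdots \wedge \theta_k$ remains QF, and one verifies it remains MA directly from the bounded-partition property of each $\theta_i$.'' This is false in general: a conjunction of mutually algebraic formulas need not be mutually algebraic. For a trivial example, if $\theta_1(u)$ and $\theta_2(v)$ are unary formulas each with infinitely many realizations, then each is MA (any formula in one free variable is), but $\theta_1(u)\wedge\theta_2(v)$ as a formula in $(u,v)$ is not, since fixing $u$ leaves infinitely many $v$. In your setting the $\theta_i$ all share the variable $y$, which helps, but they may also carry disjoint blocks of witnesses $\bar z_i$ and disjoint pieces of $\bar x$, so the same failure mode can reappear in the enlarged variable set $(\bar x,y,\bar z_1,\ldots,\bar z_k)$. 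Getting the conjunction to be MA requires a connectivity or linking argument --- this is exactly the kind of work encapsulated in results like Lemma~\ref{lemma:ma1} (Laskowski--Terry), and it is not a one-line verification. Your base case has a similar issue: you propose induction on the atomic structure of a QF formula $\alpha$ to show it is a BC of QF MA formulas, but Boolean combinations do not interact cleanly with mutual algebraicity, so the induction does not go through as stated.

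Your treatment of the negated conjuncts is closer to the mark --- the boundedness of $y$'s for fixed $\bar x$ is the right lever --- but ``a finite case analysis whose cases are individually expressible in the desired form'' hides real work: you must name those finitely many $y$'s without introducing new free variables or losing the QF MA shape of the kernels, and this typically requires parameters and a careful reorganization. In short, the inductive architecture you describe is the natural one, but the two steps you label as straightforward (closure of MA under conjunction, and the base case) are precisely where Laskowski's argument does its heavy lifting.
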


\begin{theorem}
\label{thm:qe1}
If $M$ is a mutually algebraic structure and $\phi(\bar{x})$ is mutually algebraic over $M$, then there is a quantifier free formula $\theta(\bar{x}, \bar{y})$ (which may have parameters from $M$) such that $\exists \bar{y}\, \theta(\bar{x}, \bar{y})$ is mutually algebraic over $M$ and $M \models \phi(\bar{x}) \to \exists\bar{y}\,\theta(\bar{x}, \bar{y})$.
\end{theorem}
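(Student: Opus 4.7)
The plan is to apply Theorem~\ref{thm:qe2} to $\phi$ and then compress the resulting Boolean combination into a single existential formula, using the mutual algebraicity of $\phi$ to preserve bounded fibers. By Theorem~\ref{thm:qe2}, $\phi(\bar x)$ is equivalent over $M$ to some Boolean combination of formulas $E_j \defeq \exists \bar z_j\,\theta_j(\bar y_j, \bar z_j)$, where $\theta_j$ is quantifier-free and mutually algebraic over $M$ and $\bar y_j \subseteq \bar x$.

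The first step is to put this Boolean combination into disjunctive normal form: $\phi \equiv \bigvee_i(\psi_i^+ \land \psi_i^-)$, with $\psi_i^+ = \bigwedge_{j \in P_i} E_j$ collecting the positive literals and $\psi_i^- = \bigwedge_{j \in N_i} \neg E_j$ the negative literals. Discarding the negative parts yields $\phi \to \bigvee_i \psi_i^+$. Since each $\psi_i^+$ is a conjunction of existentials over separate bound variables, it can be rewritten as a single existential $\exists \bar z^{(i)}\,\Theta_i(\bar x, \bar z^{(i)})$ with $\Theta_i$ a quantifier-free conjunction of the relevant $\theta_j$'s. Taking disjoint copies of the $\bar z^{(i)}$'s and setting $\theta(\bar x, \bar z) \defeq \bigvee_i \Theta_i(\bar x, \bar z^{(i)})$ then produces a single quantifier-free $\theta$ with $\phi \to \exists \bar z\,\theta(\bar x, \bar z)$.

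The heart of the argument, and the main obstacle, is showing that the resulting existential formula $\exists \bar z\,\theta$ is itself mutually algebraic over $M$. Although each realized disjunct $\psi_i^+ \land \psi_i^-$ inherits bounded fibers from $\phi^M$, the positive part $\psi_i^+$ on its own need not---discarding the negative conjuncts can enlarge the fibers unboundedly, and a variable of $\bar x$ that does not occur in any $\bar y_j$ for $j \in P_i$ immediately destroys mutual algebraicity of $\psi_i^+$. My plan for handling this is to refine the DNF by appealing to Laskowski's structural results on mutually algebraic quantifier-free formulas (in particular the notion of the \emph{support} of such a formula) to trim each $\Theta_i$ down to the conjuncts that genuinely bind the free variables of $\bar x$, discarding indices $i$ that cannot be realized in $\phi^M$. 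One then has to show that after trimming, the enlarged set $\bigvee_i(\psi_i^+)^M$ still has bounded fibers, using that $\phi^M$ has a uniform fiber bound. Once this check is in place, mutual algebraicity of $\exists \bar z\,\theta$ follows and the theorem is proved.
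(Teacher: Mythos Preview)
Your setup matches the paper's exactly: apply Theorem~\ref{thm:qe2}, put the result in disjunctive normal form
\[
  \phi(\bar x)\;\leftrightarrow\;\bigvee_i\Big(\psi_i^+(\bar x)\wedge\psi_i^-(\bar x)\Big),
\]
drop the negative parts, and combine the positives into a single existential quantifier-free formula. You also correctly isolate the only real difficulty: showing that $\bigvee_i\psi_i^+$ (equivalently, your $\exists\bar z\,\theta$) is mutually algebraic over $M$.

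The gap is in how you propose to overcome this difficulty. Your plan---``trim each $\Theta_i$ to the conjuncts that genuinely bind the free variables, discard unrealized disjuncts, then check bounded fibers''---does not actually close the case. Discarding unrealized disjuncts is harmless but irrelevant: the problematic disjuncts are the \emph{realized} ones for which $A_i \defeq (\psi_i^+\wedge\psi_i^-)^M$ is nonempty but finite. For such $i$, $\psi_i^+$ can easily have unbounded fibers even though $\psi_i^+\wedge\psi_i^-$ does not, and no amount of trimming conjuncts from $\Theta_i$ will repair this; the positive part simply does not carry enough information. The vague appeal to ``support'' does not supply the missing bound.

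What the paper does instead is invoke a specific dichotomy (Lemma~\ref{lemma:ma1}, due to Laskowski--Terry, combined with Lemma~\ref{lemma:ma2}): since each $\phi_i \defeq \psi_i^+\wedge\psi_i^-$ is mutually algebraic (it implies $\phi$), either $A_i$ is finite, or the positive part $\psi_i^+$ is already mutually algebraic. In the finite case one replaces $\psi_i^+$ by the quantifier-free formula $\bigvee_{\bar a\in A_i}\bar x=\bar a$, which is trivially mutually algebraic; in the infinite case one keeps $\psi_i^+$ as is. The disjunction of the resulting $\gamma_i$'s is then mutually algebraic and existential, and still implied by $\phi$. This finite/infinite split, and in particular the explicit enumeration in the finite case, is the idea your proposal is missing.
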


The second theorem is a relatively straightforward consequence of the first one, together with some facts from the theory of mutual algebraicity. Our goal for the rest of this section is to give the proof. To do so, we will need a lemma about mutually algebraic formulas, due to Laskowski and Terry.

\begin{lemma}[\cite{laskowski2020uniformly}, Lemma A.1]
\label{lemma:ma1}
Suppose $M$ is a structure and $$\phi(\bar{x}) := \bigwedge_i \alpha_i(\bar{x}_i) \land \bigwedge_j\lnot\beta_j(\bar{x}_j)$$ is a formula such that
\begin{enumerate}
    \item $\phi(\bar{x})$ is mutually algebraic over $M$.
    \item $\{\bar{a} \mid M \models \phi(\bar{a})\}$ contains an infinite set of pairwise disjoint tuples.
    \item Each $\alpha_i(\bar{x}_i)$ and $\beta_j(\bar{x}_j)$ is mutually algebraic over $M$.
\end{enumerate}
Then $\alpha(\bar{x}) = \bigwedge_i \alpha_i(\bar{x}_i)$ is mutually algebraic over $M$.
\end{lemma}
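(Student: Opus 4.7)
The plan is proof by contraposition. Suppose $\alpha(\bar{x}) = \bigwedge_i \alpha_i(\bar{x}_i)$ is not mutually algebraic over $M$. Since $\bar{x}$ admits only finitely many nontrivial partitions, pigeonhole supplies a fixed partition $\bar{x} = \bar{u}_0 \cup \bar{u}_1$ with both parts nonempty such that for every $N$ there is some $\bar{a}_0$ with more than $N$ distinct completions $\bar{a}_1$ satisfying $\alpha(\bar{a}_0, \bar{a}_1)$. Fixing the mutual-algebraicity witness $k$ for $\phi$ given by hypothesis (1), at most $k$ of these completions can also satisfy $\phi$, so at least $N - k$ must satisfy $\beta_j(\bar{a}_0, \bar{a}_1)$ for some $j$. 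Pigeonholing over the finite collection of $\beta_j$'s gives a single $j^\ast$ such that, for every $N$, some $\bar{a}_0$ admits more than $N$ distinct $\bar{a}_1$'s with both $\alpha(\bar{a}_0, \bar{a}_1)$ and $\beta_{j^\ast}(\bar{a}_0, \bar{a}_1)$ holding.

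The next step is to analyze how the variables interact with the partition. Classify each conjunct $\alpha_i$ and the formula $\beta_{j^\ast}$ by how its free variables meet $\bar{u}_0$ and $\bar{u}_1$: Type A (entirely in $\bar{u}_0$), Type B (entirely in $\bar{u}_1$), or Type C (split nontrivially). Type A conjuncts only impose a truth value on $\bar{a}_0$. Mutual algebraicity of each Type C $\alpha_i$, and of $\beta_{j^\ast}$ if it happens to be Type C, bounds the number of completions of the $\bar{u}_0$-part of its variables to a satisfying $\bar{u}_1$-part by a fixed finite number. Pigeonholing again over these finite Type C configurations, one refines to arbitrarily many $\bar{a}_1$'s (indexed by $N$) that share the same values on all Type C coordinates. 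These $\bar{a}_1$'s then differ only on the remaining $\bar{u}_1$-coordinates, which are constrained only by Type B conjuncts of $\alpha$, together with $\beta_{j^\ast}$ in case $\bar{x}_{j^\ast}$ is itself Type B. One thereby obtains an infinite family of ``free'' completions satisfying a conjunction of Type B mutually algebraic formulas.

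The final step is to derive a contradiction, and this is the main obstacle. The plan is to use the infinite family of pairwise disjoint $\phi$-solutions $\bar{c}^{(n)}$ supplied by hypothesis (2), together with a standard $\Delta$-system or Ramsey-style extraction, to select a $\phi$-solution $\bar{c}^{(n^\ast)}$ whose $\bar{u}_0$-part is generic for the Type C constraints, and then to show that the many $\alpha \land \beta_{j^\ast}$-completions of $\bar{c}^{(n^\ast)}_0$ witness a violation of mutual algebraicity of $\beta_{j^\ast}$ under a nontrivial partition of $\bar{x}_{j^\ast}$ carved out using shared $\bar{u}_0$-coordinates of Type C conjuncts of $\alpha$ that touch $\bar{x}_{j^\ast}$ — contradicting hypothesis (3). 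The delicate part is handling the case when $\bar{x}_{j^\ast}$ lies entirely inside $\bar{u}_0$ or entirely inside $\bar{u}_1$, so that the induced partition of $\bar{x}_{j^\ast}$ is trivial; in that case one must instead combine the infinite family of free Type B completions from the previous step with the fresh disjoint $\phi$-solutions to produce more than $k$ distinct $\phi$-completions above a common $\bar{a}_0$, directly violating hypothesis (1).
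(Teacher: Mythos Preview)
The paper does not actually prove this lemma; it is quoted from \cite{laskowski2020uniformly} and used as a black box in the proof of Theorem~\ref{thm:qe1}. So there is no in-paper argument to compare your sketch against, and I can only comment on the sketch itself.

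Your proposal is explicitly a plan rather than a proof, and the final paragraph---where the contradiction is supposed to appear---has gaps that look more serious than the ``delicate part'' you flag. First, nothing connects the $\bar a_0$'s that admit many $\alpha$-completions (produced from the failure of mutual algebraicity) to the $\bar u_0$-parts $\bar c^{(n)}_0$ of the pairwise-disjoint $\phi$-solutions from hypothesis~(2); you simply assert you will pass to a ``generic'' $\bar c^{(n^\ast)}_0$ and then speak of ``the many $\alpha\land\beta_{j^\ast}$-completions of $\bar c^{(n^\ast)}_0$'', but you have given no reason $\bar c^{(n^\ast)}_0$ has any such completions beyond the one coming from $\bar c^{(n^\ast)}$ itself. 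Second, even granting that, your plan to contradict mutual algebraicity of $\beta_{j^\ast}$ cannot work as written: in your pigeonhole step you froze the values of every coordinate in a Type~C conjunct \emph{and} the $\bar u_1$-part of $\bar x_{j^\ast}$ when $\beta_{j^\ast}$ is Type~C, so together with the fixed $\bar a_0$ all surviving completions agree on the whole of $\bar x_{j^\ast}$---you see a single $\beta_{j^\ast}$-witness, not many. Third, in your fallback case the ``free Type~B completions'' all satisfy $\beta_{j^\ast}$ and hence fail $\phi$, so it is unclear how combining them with disjoint $\phi$-solutions manufactures more than $k$ $\phi$-completions over a common $\bar a_0$. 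The upshot is that hypothesis~(2) is doing no real work in your outline, and without it the conclusion is false (e.g.\ in an infinite $M$ take $\alpha_1(x),\alpha_2(y)$ tautologies and $\beta_1(x)=(x\neq c)$, $\beta_2(y)=(y\neq d)$: then $\phi$ has a unique solution and (1),(3) hold, yet $\alpha(x,y)=\top$ is not mutually algebraic). The Laskowski--Terry argument uses the infinite disjoint family far more directly, and you would need to restructure the endgame rather than patch it.
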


Actually we need a slightly stronger version of this lemma. In particular, we need to replace the second condition on $\phi$ with the apparently weaker assumption that $\{\bar{a} \mid M \models \phi(\bar{a})\}$ is infinite. The next lemma, also due to Laskowski, tells us that since $\phi$ is mutually algebraic, the two conditions are actually equivalent.

\begin{lemma}[\cite{laskowski2009elementary}, Lemma 3.1]
\label{lemma:ma2}
Suppose $M$ is a structure and $\phi(\bar{x})$ is a formula which is mutually algebraic over $M$. If $\{\bar{x} \mid M \models \phi(\bar{a})\}$ is infinite then it contains an infinite set of pairwise disjoint tuples.
\end{lemma}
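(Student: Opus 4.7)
The plan is to prove the lemma by contradiction, combining a maximality argument with pigeonhole to locate a coordinate that is pinned to a single value by infinitely many satisfying tuples, then reading off the contradiction directly from the definition of mutual algebraicity. Write $\bar{x} = (x_1, \ldots, x_n)$; the case $n = 1$ is vacuous since ``pairwise disjoint'' just means distinct, so I will assume $n \geq 2$, and let $k$ be the constant from the definition of mutual algebraicity for $\phi$.

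Let $S = \{\bar{a} \mid M \models \phi(\bar{a})\}$ and suppose, toward a contradiction, that $S$ contains no infinite set of pairwise disjoint tuples. Choose a maximal pairwise disjoint family $\bar{a}^{(1)}, \ldots, \bar{a}^{(m)}$ from $S$; by assumption $m$ is finite. Let $A \subseteq M$ be the (finite) set of elements appearing in these tuples. By maximality of the family, every tuple $\bar{a} \in S$ must share at least one entry with some $\bar{a}^{(i)}$, so every $\bar{a} \in S$ contains some element of $A$ at some coordinate. Since there are only finitely many pairs $(j, a) \in \{1, \ldots, n\} \times A$, an application of pigeonhole yields a coordinate $j$ and an element $a \in A$ for which the set $T = \{\bar{a} \in S \mid a_j = a\}$ is infinite.

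Now apply the definition of mutual algebraicity with the nontrivial partition $\bar{x}_0 = (x_j)$, $\bar{x}_1 = \bar{x} \setminus (x_j)$ (nontrivial because $n \geq 2$) and with $\bar{a}_0 = (a)$: there should be at most $k$ tuples $\bar{a}_1$ completing $(a)$ to a satisfier of $\phi$. But the infinitely many tuples in $T$ all agree on coordinate $j$ and differ elsewhere, producing infinitely many such completions, a contradiction. Hence $S$ contains an infinite set of pairwise disjoint tuples.

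There is no real obstacle here; the only moving parts are verifying that the partition used in the final step is nontrivial (handled by separating out the trivial $n = 1$ case) and noting that pigeonhole is applied to the finite product $\{1,\ldots,n\} \times A$ rather than to $A$ alone, since a single tuple in $S$ could in principle use different elements of $A$ at different coordinates.
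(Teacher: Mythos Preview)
Your argument is correct. Note, however, that the paper does not actually prove this lemma: it is quoted verbatim as Lemma~3.1 of \cite{laskowski2009elementary} and used as a black box, so there is no in-paper proof to compare against. That said, your proof is essentially the standard one (and matches the spirit of Laskowski's original): extract a maximal finite pairwise-disjoint family, pigeonhole on the finitely many pairs (coordinate, element-of-the-family) to pin one coordinate to a fixed value in infinitely many realizations, and then read off a violation of the mutual algebraicity bound for the partition that isolates that single coordinate. The bookkeeping you flag at the end---separating out $n=1$ so the partition is nontrivial, and applying pigeonhole to $\{1,\ldots,n\}\times A$ rather than to $A$ alone---is exactly right.
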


\noindent We can now prove Theorem~\ref{thm:qe1}.

\begin{proof}[Proof of Theorem~\ref{thm:qe1}]
By applying Laskowski's theorem and writing the resulting formula in disjunctive normal form, we get
\[
  M \models \phi(\bar{x}) \leftrightarrow \bigvee_{i} \left(\bigwedge_j \alpha_{i,j}(\bar{x}_{i, j}) \land \bigwedge_k \lnot \beta_{i, k}(\bar{x}_{i, k})\right)
\]
where each $\alpha_{i, j}(\bar{x}_{i, j})$ and each $\beta_{i, k}(\bar{x}_{i, k})$ is existential and mutually algebraic over $M$.

For each $i$, define 
\begin{align*}
\phi_i(\bar{x}) &:= \bigwedge_j \alpha_{i,j}(\bar{x}_{i, j}) \land \bigwedge_k \lnot \beta_{i, k}(\bar{x}_{i, k})\\
\alpha_i(\bar{x}) &:= \bigwedge_j \alpha_{i,j}(\bar{x}_{i, j})\\
A_i &= \{\bar{a} \mid M \models \phi_i(\bar{a})\}\\
\end{align*}
Note that since $\phi(\bar{x})$ is mutually algebraic and $M \models \phi_i(\bar{x}) \to \phi(\bar{x})$, $\phi_i(\bar{x})$ is also mutually algebraic. Thus by Lemma~\ref{lemma:ma1} above (or rather, its slightly strengthened version), we have that either $A_i$ is finite or $\alpha_i(\bar{x})$ is mutually algebraic.

In the former case, define $\gamma_i(\bar{x}) := \bigvee_{\bar{a} \in A_i}\bar{x} = \bar{a}$ and in the latter case, define $\gamma_i(\bar{x}) := \alpha_i(\bar{x})$. In either case, note that $\gamma_i$ is existential and mutually algebraic over $M$ and that $M \models \phi_i(\bar{x}) \to \gamma_i(\bar{x})$. Since $\phi(\bar{x})$ and $\bigvee_{i} \phi_i(\bar{x}_i)$ are equivalent in $M$, this gives us
\[
  M \models \phi(\bar{x}) \to \bigvee_i \gamma_i(\bar{x}).
\]
Since each $\gamma_i(\bar{x})$ is mutually algebraic, so is their disjunction. Pulling the existential quantifiers to the front, we have the desired formula.
\end{proof}

\section{The counterexample}
\label{sec:counterexample}

In this section we will describe the theory we use to answer Pakhomov's question. In order to do so, we need to fix a computable infinite binary tree $R$ with the property that none of its paths can be computably approximated. More precisely, say that a sequence $x \in \Cantor$ is \term{guessable} if there is an algorithm which, for each number $n$, enumerates a list of at most $O(n^2)$ strings of length $n$, one of which is $x\uh n$. We need a computable infinite binary tree $R$, none of whose paths are guessable.

It is not hard to directly construct such a tree $R$ but we can also simply pick a computable infinite binary tree whose paths are all Martin-L\"of random. Such a tree is known to exist\footnote{For example we can simply take the complement of any of the levels of the universal Martin-L\"of test.} and it is also easy to check that Martin-L\"of random sequences are not guessable. See the book \emph{Algorithmic Randomness and Complexity} by Downey and Hirschfeldt for more details about Martin-L\"of randomness~\cite{downey2010algorithmic}.

Essentially, our theory is the simplest theory all of whose models code an infinite path through $R$. We now give a more precise description.

\medskip\noindent\textbf{The language.} Let $\L$ be the language whose signature consists of:
\begin{enumerate}
    \item A constant symbol, $0$.
    \item Two unary function symbols, $S$ and $P$.
    \item A unary relation symbol, $A$.
\end{enumerate}
Also, although it is not officially part of the language $\L$, we will often use the following notation. Given any $n \in \N$,
\begin{itemize}
    \item $\lit{n}$ denotes the $\L$-term $S^{n}(0)$, e.g.\ $\lit{3}$ denotes $S(S(S(0)))$.
    \item $\lit{-n}$ denotes the $\L$-term $P^{n}(0)$, e.g.\ $\lit{-3}$ denotes $P(P(P(0)))$.
    \item $x + \lit{n}$ denotes the $\L$-term $S^n(x)$ and $x + \lit{-n}$ denotes the $\L$-term $P^{n}(x)$. We will also sometimes use $x - \lit{n}$ to denote $x + \lit{-n}$.
    \item We will often refer to $S$ as ``successor'' and $P$ as ``predecessor.''
\end{itemize}

\medskip\noindent\textbf{The theory.} Fix a computable infinite binary tree $R$, none of whose infinite paths are guessable, and let $T$ be the $\L$-theory consisting of:
\begin{enumerate}
    \item The theory of the integers with $0$, successor and predecessor, i.e.\ $\Th(\Z, 0, \Succ, \Pred)$.
    \item Axioms stating that $A$ (restricted to the elements $\lit{0}, \lit{1}, \lit{2}, \ldots$) describes a path through $R$. More precisely, for each $n \in \N$, $T$ contains the sentence
    \[
        \bigvee_{\sigma \in R_n} \bigg[\bigg(\bigwedge_{\sigma(i) = 0}\lnot A(\lit{i})\bigg)\land \bigg(\bigwedge_{\sigma(i) = 1} A(\lit{i}) \bigg)\bigg]
    \]
    where $R_n$ denotes the set of strings in $R$ of length $n$. 
\end{enumerate}
The second set of axioms ensures that from any model of $T$, we can computably recover a path through the tree $R$. We will now explain how this works. 

Given a sentence $\varphi$ and a model $M$, let's use the notation $\llbracket \varphi \rrbracket^M$ to denote \textbf{the truth-value of \boldmath$\varphi$ in \boldmath$M$}. We will often identify sequences of truth values with binary sequences by thinking of ``true'' as $1$ and ``false'' as $0$. Now suppose that $M$ is a model of $T$. We claim that the sequence $\tv{A(\lit{0})}{M}, \tv{A(\lit{1})}{M}, \tv{A(\lit{2})}{M}, \ldots$ is an infinite path through $R$. The point is that the axioms above guarantee that, for each $n \in \N$, the length $n$ initial segment of this sequence agrees with some \emph{specific} length $n$ string in $R$. Since all of its proper initial segments are in $R$, the sequence $\tv{A(\lit{0})}{M}, \tv{A(\lit{1})}{M}, \tv{A(\lit{2})}{M}, \ldots$ is indeed a path through $R$.

Note that this immediately implies that no model of $T$ is not computable---any such model computes an infinite path through $R$, but no such path is computable. In spite of this, we will see later that models of $T$ have quantifier elimination and so are very well-behaved in model-theoretic terms.

\medskip\noindent\textbf{Models of $T$.}
It will help to have a clear picture of the structure of models of $T$ and to fix some terminology for later. Since $T$ includes the theory of the integers with successor and predecessor, $T$ proves that $S$ and $P$ are injective functions with no cycles and that they are inverses. Thus any model of $T$ consists of a disjoint union of one or more $\Z$-chains, with $S$ moving forward along each chain, $P$ moving backward and the constant $0$ sitting in the middle of one of the chains. There is also a well-defined notion of distance: the distance between two elements of the same chain is simply the number of steps apart they are on the chain (and the distance between elements of two different chains is $\infty$).

Furthermore, each element of each chain is labelled with a truth value (corresponding to whether the predicate $A$ holds of that element or not) and thus each chain gives rise to a bi-infinite binary sequence. If we start at the element $0$ and move forward along its chain, then, as we saw above, the binary sequence we get is guaranteed to be a path through the tree $R$.

Given a model $M$ of $T$ and elements $a, b \in M$, we will use the following terminology.
\begin{itemize}
    \item The \term{signed distance} from $a$ to $b$ is the unique integer $k$ (if it exists) such that $b = a + \lit{k}$. If no such $k$ exists then the signed distance is $\infty$.
    \item The \term{distance between} $a$ and $b$ is the absolute value of the signed distance (where the absolute value of $\infty$ is $\infty$).
    \item For $k \in \N$, the \term{\boldmath$k$-neighborhood} of $a$ is the set $\{a - \lit{k}, a - \lit{(k - 1)}, \ldots, a + \lit{k}\}$.
\end{itemize}
Note that if the signed distance from $a$ to $b$ is $k < \infty$, the signed distance from $b$ to $a$ is $-k$.

\begin{remark}
\label{remark:generic}
By choosing a somewhat more complicated theory, it is possible to simplify some of the proofs later in this paper. In particular, we can add axioms to $T$ which state that $A$ behaves \emph{generically}, in the sense that every finite pattern of values of $A$ occurs somewhere. More precisely, for every finite binary string $\sigma \in 2^{<\omega}$ we add the axiom
\[
    \exists x\bigg[\bigg(\bigwedge_{\sigma(i) = 0} \lnot A(x + \lit{i})\bigg) \land \bigg(\bigwedge_{\sigma(i) = 1}A(x + \lit{i})\bigg)\bigg].
\]
Equivalently, we can replace $T$ with its model completion. Making this change would allow us to simplify the proofs of Propositions~\ref{prop:indiscernability} and~\ref{prop:satisfaction1} and Lemma~\ref{lemma:ma_close}.
\end{remark}

\section{Proof of the main theorem}

Let $\L$ and $T$ be the language and theory described in the previous section. In this section, we will prove that no theory which is definitionally equivalent to $T$ has a computable model. Since $T$ is a consistent, c.e.\ theory, this is enough to prove Theorem~\ref{thm:main}. 

In order to prove this, let's fix a language $\L'$ and an $\L'$-theory $T'$ which is definitionally equivalent to $T$. Note that since the language $\L$ has finite signature, we may assume that $\L'$ does as well.\footnote{The point is that if a theory $T$ is in a language with finite signature and $T'$ is any theory definitionally equivalent to $T$ then $T'$ has a subtheory in a language with finite signature which is also definitionally equivalent to $T$.} Now fix a model $M$ of $T'$. Our goal is to prove that $M$ is not computable.\footnote{Recall that a model is computable if its underlying set is $\N$ and all of its functions and relations are computable as functions or relations on $\N$. Note that since we are assuming $\L'$ has finite signature, we don't need to worry about whether these functions and relations are uniformly computable.} 



Before beginning, it will be useful to fix a few conventions. First, recall from section~\ref{sec:de} that $M$ gives rise to a model $\unprime{M}$ of $T$ which has the same underlying set and the same algebra of definable sets as $M$. We will often abuse notation slightly and use $M$ to refer to both $M$ itself and $\unprime{M}$. For example, if $\phi$ is an $\L$-formula, we will use $M \models \phi(\bar{a})$ to mean $\unprime{M} \models \phi(\bar{a})$. Also, we will say things like ``$b$ is the successor of $a$'' to mean $\unprime{M} \models b = S(a)$. Second, unless explicitly stated otherwise, we assume that formulas do not contain parameters.

\subsection{Proof strategy}

To prove that $M$ is not computable, we will show that the sequence $\llbracket A(\lit{0})\rrbracket^M, \llbracket A(\lit{1})\rrbracket^M, \ldots$ is guessable (in the sense of section~\ref{sec:counterexample}) relative to an oracle for $M$. Since the axioms of $T$ ensure that this sequence is a path through the tree $R$, and hence not guessable, this is enough to show that $M$ is not computable.

To show that the sequence $\llbracket A(\lit{0})\rrbracket^M, \llbracket A(\lit{1})\rrbracket^M, \ldots$ is guessable from an oracle for $M$, we will first prove that $M$ is mutually algebraic. To do so, we will essentially show that models of $T$ have quantifier elimination and use this to prove that $\unprime{M}$ is mutually algebraic. The mutual algebraicity of $M$ itself follows because mutual algebraicity is preserved under definitional equivalence (because mutual algebraicity depends only on the algebra of definable sets, which is itself preserved under definitional equivalence).

Once we know that $M$ is mutually algebraic, we can apply the quantifier elimination results of section~\ref{sec:qe_ma} to infer that $S$ and $A$ are close to being quantifier-free definable in $M$. In particular, the formula $S(x) = y$ is mutually algebraic and so, by Theorem~\ref{thm:qe1}, there is an existential $\L'$-formula $\psi_S(x, y)$ such that $\psi_S$ is mutually algebraic and $M \models S(x) = y \to \psi_S(x, y)$. 

We can think of $\psi_S$ as a multi-valued function which takes each element $a \in M$ to the set of elements $b \in M$ such that $M \models \psi_S(a, b)$. Since $\psi_S$ is an existential formula, the graph of this multi-valued function is computably enumerable from an oracle for $M$. Since $\psi_S$ is mutually algebraic, there are only finitely many elements in the image of each $a$. And since $M \models S(x) = y \to \psi_S(x, y)$, the successor of $a$ is always in the image of $a$. Putting this all together, we can think of this multi-valued function as giving us, for each $a \in M$, a finite list of guesses for $S(a)$ which is computably enumerable relative to an oracle for $M$.

To finish the proof, we can leverage our ability to enumerate a finite list of guesses for the successor of each element to enumerate a short list of guesses for each initial segment of the sequence $\llbracket A(\lit{0}) \rrbracket^M, \llbracket A(\lit{1}) \rrbracket^M, \ldots$. To accomplish this, we will have to make use of our understanding of the structure of definable subsets of $M$, which we first develop in order to prove mutual algebraicity.

\subsection{Model-theoretic tameness of \texorpdfstring{$M$}{M}}

Our first goal is to prove that $M$ is mutually algebraic. One way to do this is to show that models of $T$ satisfy quantifier elimination and then note that all atomic $\L$-formulas are mutually algebraic over $\unprime{M}$---this implies that $\unprime{M}$ is mutually algebraic and hence that $M$ is as well. However, it will be helpful for us later to have a more detailed understanding of the structure of definable subsets of $M$. Thus, instead of just proving quantifier elimination for models of $T$, we will prove a stronger statement, which is essentially a quantitative version of quantifier elimination.



To explain this stronger statement, let's first consider the meaning of quantifier elimination in models of $T$. By examining the atomic formulas of $\L$, we can see that it means that for every $\L$-formula $\phi(\bar{x})$ and tuple $\bar{a}$, the truth of $\phi(\bar{a})$ depends only on which elements of $\bar{a}$ are close to each other (and to $0$), how close they are, and the values of the predicate $A$ in a small neighborhood of each element. In our stronger statement, we will quantify exactly what ``close'' and ``small'' mean in this description. We will also extend this to $\L'$-formulas. We will refer to the resulting statement as the \term{indiscernability principle} for $M$. In order to make all of this precise, we first need to introduce some terminology.


\medskip\noindent\textbf{The radius of a formula.}
For any $\L$-formula $\phi$ written in prenex normal form, inductively define the \term{radius} of $\phi$, written $\rad(\phi)$, as follows.
\begin{enumerate}
    \item If $\phi$ is quantifier free then $\rad(\phi)$ is the total number of occurrences of $S$ and $P$ in $\phi$.
    \item If $\phi$ has the form $\exists x\,\psi$ or $\forall x\, \psi$ then $\rad(\phi) = 2\cdot\rad(\psi)$.
\end{enumerate}
If $\phi$ is an $\L'$-formula in prenex normal form then we define $\rad(\phi)$ in a similar way except that we change the case where $\phi$ is quantifier free to define $\rad(\phi)$ to be $\rad(\unprime{\phi})$ (after first putting $\unprime{\phi}$ in prenex normal form). The idea of the radius of a formula is that in the description of quantifier elimination for $M$ above, we should interpret ``close'' to mean ``within distance $\rad(\phi)$.''

\medskip\noindent\textbf{The \boldmath$r$-type of a tuple.}
Given a tuple $\bar{a} = (a_1,\ldots,a_n)$ in $M$ and a number $r \in \N$, define:
\begin{itemize}
    \item The \term{\boldmath$r$-distance table} of $\bar{a}$ records the signed distances between the coordinates of $\bar{a}$ and between each coordinate of $\bar{a}$ and $0$, treating any distance greater than $r$ as $\infty$. More precisely, it is the function $f \colon \{0,1,\ldots, n\}^2 \to \{-r, -(r - 1), \ldots, r, \infty\}$ such that if the distance between $a_i$ and $a_j$ is at most $r$ then $f(i, j)$ is the signed distance from $a_i$ to $a_j$ and otherwise $f(i, j) = \infty$ (and where we interpret $a_0$ as $0$).
    \item The \term{\boldmath$r$-neighborhood type} of any element $a \in M$ is the sequence of truth values $\llbracket A(a - \lit{r})\rrbracket^M, \llbracket A(a - \lit{r - 1})\rrbracket^M, \ldots, \llbracket A(a + \lit{r})\rrbracket^M$.
    \item The \term{\boldmath$r$-type} of $\bar{a}$ is the $r$-distance table of $\bar{a}$ together with the sequence recording the $r$-neighborhood type of each coordinate of $\bar{a}$.
\end{itemize}

\medskip\noindent\textbf{The indiscernability principle.} We can now state a formal version of the indiscernability principle described above.

\begin{proposition}
\label{prop:indiscernability}
If $\phi$ is an $\L$-formula in prenex normal form and of radius $r$ and $\bar{a}, \bar{b}$ are tuples in $M$ with the same $r$-type then $M \models \phi(\bar{a})$ if and only if $M \models \phi(\bar{b})$.
\end{proposition}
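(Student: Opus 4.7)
The plan is to prove the proposition by induction on the quantifier rank of $\phi$ in prenex normal form, exploiting the doubling of radius in the inductive clause of $\rad$ as the slack that powers a back-and-forth. For the base case, $\phi$ is quantifier free and $r = \rad(\phi)$ counts the total occurrences of $S$ and $P$; every atomic $\L$-formula is either $t_1 = t_2$ or $A(t)$ with the terms built from variables, $0$, and at most $r$ applications of $S, P$. The truth of such an atom on a tuple is then determined by signed distances between variables (and $0$) of magnitude at most $r$ together with the $A$-values at positions within $r$ of each variable---precisely the data recorded by the $r$-type. Hence all atoms, and so any Boolean combination, agree on tuples of the same $r$-type.

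For the inductive step, suppose $\phi = Q x\, \psi(x, \bar{y})$ with $\rad(\psi) = r/2$; by taking complements it suffices to treat $Q = \exists$. Given $\bar{a}, \bar{b}$ of the same $r$-type and a witness $c$ with $M \models \psi(c, \bar{a})$, the induction hypothesis at radius $r/2$ reduces the task to producing some $c' \in M$ such that $(c, \bar{a})$ and $(c', \bar{b})$ have the same $(r/2)$-type. I would split into cases according to whether $c$ is close to $\bar{a} \cup \{0\}$ or far from it.

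In the close case, $c$ lies within distance $r/2$ of some $a_i$ (or of $0$); write $c = a_i + k$ with $|k| \leq r/2$ and set $c' \defeq b_i + k$. The verification that $(c', \bar{b})$ realizes the same $(r/2)$-type as $(c, \bar{a})$ uses exactly the slack provided by the doubling in $\rad$: for coordinates $a_j$ within distance $r$ of $a_i$, the $r$-distance table forces $b_j$ to sit at the same signed distance from $b_i$ as $a_j$ does from $a_i$, so $c - a_j = c' - b_j$; for coordinates outside distance $r$, the triangle inequality gives $|c - a_j|, |c' - b_j| > r/2$, so both register as $\infty$ in the $(r/2)$-distance tables. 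The $(r/2)$-neighborhoods of $c$ and $c'$ sit inside the $r$-neighborhoods of $a_i$ and $b_i$ respectively, which are matched by hypothesis.

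The far case---where $c$ is at distance $> r/2$ from every $a_i$ and from $0$---is the main obstacle, because $M$ need not be generic (see Remark~\ref{remark:generic}) and so the $(r/2)$-neighborhood type $p$ of $c$ may occur only finitely often in $M$; we cannot simply ``move $c$ away'' from $\bar{b}$. My plan here is to use an inclusion--exclusion counting argument rather than constructing $c'$ directly from $c$. Let $E_p \subseteq M$ be the set of elements with $(r/2)$-neighborhood type $p$. For each $a_i$, the set $E_p \cap \{e : |e - a_i| \leq r/2\}$ depends only on the $r$-neighborhood type of $a_i$, since any candidate $e = a_i + k$ with $|k| \leq r/2$ has its $(r/2)$-neighborhood sitting inside the $r$-neighborhood of $a_i$; analogously, intersections of several such balls are determined by pairwise signed distances of magnitude $\leq r$ together with the $r$-neighborhood type of a single center, all of which are data recorded in the $r$-type shared by $\bar{a}$ and $\bar{b}$. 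Inclusion--exclusion then shows that the number of elements of $E_p$ within distance $r/2$ of $\bar{a} \cup \{0\}$ equals the corresponding number for $\bar{b} \cup \{0\}$. Since $c$ witnesses that this ``near'' portion is a proper subset of $E_p$ on the $\bar{a}$ side, the same must hold on the $\bar{b}$ side, and a suitable $c'$ can be selected; by construction it has neighborhood type $p$ and lies at distance $> r/2$ from every $b_i$ and from $0$, giving the required $(r/2)$-type match.
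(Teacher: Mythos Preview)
Your proof is correct and follows the same architecture as the paper's: induction on the number of quantifiers, the same split into a ``close'' witness case (handled identically, via $c' = b_i + k$) and a ``far'' witness case. The only difference is cosmetic: in the far case you use inclusion--exclusion to show that the number of elements of $E_p$ within distance $r/2$ of $\bar a\cup\{0\}$ equals the corresponding count for $\bar b\cup\{0\}$, whereas the paper argues the contrapositive directly---assuming every element of $E_p$ lies near $\bar b$, it translates each $b_{i_j}+k_j$ to $a_{i_j}+k_j$, obtains $|E_p|$ distinct elements of $E_p$ near $\bar a$, and concludes $c$ must be among them. Both arguments rest on exactly the same observation (the map $b_i+k\mapsto a_i+k$ for $|k|\le r/2$ is a well-defined bijection between the ``near'' parts of $E_p$, because the $r$-type records all the relevant distances and neighborhood data); the paper's version simply avoids the inclusion--exclusion bookkeeping by working under the contradiction hypothesis that the near part on the $\bar b$ side is all of $E_p$.
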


\begin{proof}
By induction on the number of quantifiers in $\phi$. For quantifier free formulas, this is easy to verify. If $\phi$ has quantifiers then it suffices to assume $\phi = \exists x\, \psi$ since the case of a universal quantifier is symmetric (i.e.\ by considering $\lnot\phi$ instead of $\phi$ and pushing the negation past the quantifiers to get it into prenex normal form). Also, it's enough to assume $M \models \phi(\bar{a})$ and prove $M \models \phi(\bar{b})$---the other direction also follows by symmetry.

So let's assume that $M \models \exists x\, \psi(\bar{a}, x)$. Thus there is some $c$ such that $M \models \psi(\bar{a}, c)$. We need to find some $d$ such that $M \models \psi(\bar{b}, d)$. Note that it is enough to find $d$ such that $\bar{a}c$ and $\bar{b}d$ have the same $r/2$-type, because if this holds then we can apply the induction hypothesis to $\psi$ to get that $M \models \psi(\bar{b}, d)$.

There are two cases depending on whether $c$ is close to any element of $\bar{a}$ or not. Also to reduce casework, we adopt the convention that $a_0 = b_0 = 0$ (note that this does not change the fact that $\bar{a}$ and $\bar{b}$ have the same $r$-type).

\medskip\noindent\textbf{Case 1.} First suppose that $c$ is distance at most $r/2$ from some coordinate of $\bar{a}$. In particular, there is some $i \leq n$ and $-r/2 \leq k \leq r/2$ such that $c = a_i + \lit{k}$. In this case, we can pick $d$ to be close to the corresponding element of $\bar{b}$, i.e.\ $d = b_i + \lit{k}$. We claim that $\bar{a}c$ and $\bar{b}d$ have the same $r/2$-type.

First, we need to check that the $r/2$-distance tables are the same. It suffices to check that for each $j$, either $a_j, c$ and $b_j, d$ have the same signed distance or both have distance greater than $r/2$. Suppose that $a_j = c + \lit{k'}$ for some integer $-r/2 \leq k' \leq r/2$. By substitution, $a_j = (a_i + \lit{k}) + \lit{k'} = a_i + \lit{k + k'}$. Since $|k + k'| \leq r$ and since $\bar{a}, \bar{b}$ have the same $r$-distance table, this implies that $b_j = b_i + \lit{k + k'}$ and hence that $b_j = d + \lit{k'}$. The other cases can be handled similarly.

Second, we need to check that the $r/2$-neighborhood type of $c$ is the same as that of $d$. This follows from the fact that the $r/2$-neighborhood of $c$ is contained in the $r$-neighborhood of $a_i$, the $r/2$-neighborhood of $d$ is contained in the $r$-neighborhood of $b_i$ and the $r$-neighborhood types of $a_i$ and $b_i$ are the same.

\medskip\noindent\textbf{Case 2.} Now suppose that $c$ is distance more than $r/2$ from every coordinate of $\bar{a}$. It is enough to find some $d$ which has the same $r/2$-neighborhood type as $c$ and which is distance more than $r/2$ from every coordinate of $\bar{b}$. The point is that for such a $d$, it is easy to see that $\bar{a}c$ and $\bar{b}d$ have the same $r/2$-type.

We now claim that some such $d$ must exist.\footnote{This case becomes more or less trivial if $T$ is modified in the way described in Remark~\ref{remark:generic}. This is because the existence of such an element $d$ is guaranteed by the extra axioms described in that remark.} Suppose for contradiction that this is false. Then every element of $M$ with the same $r/2$-neighborhood type as $c$ must be contained in the $r/2$ neighborhood of some element of $\bar{b}$. In particular, this implies that there are a finite number of such elements and they all have the form $b_i + \lit{k}$ for some $i \leq n$ and $-r/2 \leq k \leq r/2$.

Suppose there are exactly $m$ such elements and they are equal to $b_{i_1} + \lit{k_1},\ldots,b_{i_m} + \lit{k_m}$ (where for each $j$, $-r/2 \leq k_j \leq r/2$). It follows from the fact that $\bar{a}$ and $\bar{b}$ have the same $r$-type that the corresponding elements $a_{i_1} + \lit{k_1}, \ldots, a_{i_m} + \lit{k_m}$ are also all distinct and have the same $r/2$-neighborhood type as $c$. However, since only $m$ elements of $M$ have this $r/2$-neighborhood type, $c$ must be among this list of elements, which contradicts the assumption that $c$ is not within distance $r/2$ of any coordinate of $\bar{a}$.
\end{proof}

\begin{corollary}
Proposition~\ref{prop:indiscernability} also holds for all $\L'$-formulas in prenex normal form.
\end{corollary}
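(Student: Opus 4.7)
The plan is to reduce to Proposition~\ref{prop:indiscernability} itself via the translation $\phi \mapsto \unprime{\phi}$ introduced in Section~\ref{sec:de}. The subtlety is that even if $\phi$ is an $\L'$-formula in prenex normal form, $\unprime{\phi}$ need not be: the $\L$-definitions that replace the $\L'$-symbols inside the quantifier-free matrix of $\phi$ can themselves contain quantifiers. So I would first pull those quantifiers out to the front (renaming bound variables to avoid clashes) to obtain an $\L$-formula $\chi$ in prenex normal form which is logically equivalent to $\unprime{\phi}$ in every structure.

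With $\chi$ in hand, the compatibility of the model and formula translations noted at the end of Section~\ref{sec:de} gives $M \models \phi(\bar{a})$ iff $\unprime{M} \models \unprime{\phi}(\bar{a})$ iff $\unprime{M} \models \chi(\bar{a})$ for any tuple $\bar{a}$ in $M$, and similarly for $\bar{b}$. The corollary then follows by applying Proposition~\ref{prop:indiscernability} to $\chi$ and the tuples $\bar{a}, \bar{b}$, provided I can show that $\rad(\chi) = \rad(\phi)$.

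Verifying this radius identity is the one thing that needs to be checked, but it is just unwinding the definitions. Writing $\phi = Q_1 x_1 \cdots Q_n x_n \psi$ with $\psi$ a quantifier-free $\L'$-formula, and writing the prenex normal form of $\unprime{\psi}$ as $Q'_1 y_1 \cdots Q'_m y_m \theta$ with $\theta$ quantifier-free $\L$, let $N$ be the total number of occurrences of $S$ and $P$ in $\theta$. Then the $\L'$-definition of radius yields $\rad(\phi) = 2^n \cdot \rad(\unprime{\psi}_{\mathrm{prenex}}) = 2^{n+m} \cdot N$. On the other hand, $\chi$ is precisely $Q_1 x_1 \cdots Q_n x_n Q'_1 y_1 \cdots Q'_m y_m \theta$, so the $\L$-definition of radius also gives $\rad(\chi) = 2^{n+m} \cdot N$, as desired.

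No real obstacle is anticipated: the corollary essentially records that the definition of $\rad$ for $\L'$-formulas was set up precisely to make this reduction go through automatically, and the only technical point is the standard observation that pulling quantifiers out of a translated matrix produces a prenex $\L$-formula whose radius matches the value prescribed by the $\L'$-radius convention.
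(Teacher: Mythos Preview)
Your argument is correct and complete: translate $\phi$ to $\unprime{\phi}$, pull the quantifiers introduced by the $\L$-definitions of the $\L'$-symbols past the original prefix to obtain a prenex $\L$-formula $\chi$, verify $\rad(\chi) = \rad(\phi)$, and invoke Proposition~\ref{prop:indiscernability} once. The radius identity check you give is exactly right, since the definition of $\rad$ for quantifier-free $\L'$-formulas is rigged to make $\rad(\psi) = 2^m N$, and the outer prefix contributes the remaining factor $2^n$ on both sides.

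The paper organizes the proof differently. Rather than translating all of $\phi$ at once, it re-runs the induction of Proposition~\ref{prop:indiscernability} directly on the $\L'$-formula: the base case (quantifier-free $\phi$) is handled by translating and appealing to Proposition~\ref{prop:indiscernability} for $\unprime{\phi}$, and the inductive step strips one quantifier from $\phi$ using literally the same witness-matching argument as before. The two approaches amount to the same thing---your single application of Proposition~\ref{prop:indiscernability} to $\chi$ internally performs an induction on the $n+m$ quantifiers of $\chi$, whereas the paper performs the outer $n$ steps of that induction by hand and then hands off the remaining $m$ steps to Proposition~\ref{prop:indiscernability} at the base case. Your route is slightly cleaner in that it requires only one invocation of the proposition and makes explicit the radius bookkeeping that the paper leaves implicit; the paper's route avoids having to name or reason about the global prenex form $\chi$.
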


\begin{proof}
Suppose $\phi$ is an $\L'$-formula of radius $r$ and that $\bar{a}, \bar{b}$ are tuples in $M$ with the same $r$-type. In the case where $\phi$ is quantifier free, the radius of $\unprime{\phi}$ is also $r$, for the trivial reason that radius of a quantifier-free $\L'$-formula is defined as the radius of its $\L$-translation. Hence, we can apply the indiscernability principle to $\unprime{\phi}$ to get
\begin{align*}
M \models \phi(\bar{a}) &\iff M \models \unprime{\phi}(\bar{a})\\
&\iff M \models \unprime{\phi}(\bar{b}) \iff M \models \phi(\bar{b}).
\end{align*}
When $\phi$ has quantifiers, the inductive argument that we gave in the proof of Proposition~\ref{prop:indiscernability} still works.
\end{proof}

\medskip\noindent\textbf{\boldmath$M$ is mutually algebraic.}
For a fixed $r$-type, the assertion that a tuple $\bar{x} = (x_1,\ldots,x_n)$ has that $r$-type is expressible as a Boolean combination of $\L$-formulas of the following forms.
\begin{enumerate}
    \item $x_i = x_j + \lit{k}$ for some indices $i, j \leq n$ and some $-r \leq k \leq r$.  \item $x_i = \lit{k}$ for some index $i \leq n$ and some $-r \leq k \leq r$.
    \item $A(x_i + \lit{k})$ for some index $i \leq n$ and some $-r \leq k \leq r$.
\end{enumerate}
It is easy to check that each type of formula listed above is mutually algebraic over $M$ (for the second and third there is actually nothing to check because they both involve only one free variable). Furthermore, for any fixed $r$, there are a finite number of possible $r$-types. Thus the indiscernability principle implies that every $\L$-formula $\phi$ is equivalent to a finite conjunction of Boolean combinations of mutually algebraic $\L$-formulas (namely a conjunction over all $r$-types that satisfy $\phi$). 

This shows that $M$ is mutually algebraic when considered as an $\L$-structure (i.e.\ that $\unprime{M}$ is mutually algebraic). However, it is easy to conclude that $M$ is also mutually algebraic when considered as an $\L'$-structure. For a given formula $\phi$, we know from our reasoning above that $\unprime{\phi}$ is equivalent to a Boolean combination of mutually algebraic $\L$-formulas. Next, we can replace each formula in this Boolean combination by its corresponding $\L'$-formula. Since the mutual algebraicity of a formula only depends on the set that it defines, and since this is invariant under translating between $\L$ and $\L'$, we conclude that $\phi$ is equivalent to a Boolean combination of mutually algebraic $\L'$-formulas.

\begin{remark}
The reasoning above also shows that $M$ has quantifier elimination when considered as an $\L$-structure (i.e.\ $\unprime{M}$ has quantifier elimination). The point is just that a tuple having a certain $r$-type is expressible as a quantifier free $\L$-formula.
\end{remark}

\subsection{The satisfaction algorithm}

We will now explain how the indiscernability principle implies that the satisfaction relation for $\L'$-formulas over $M$ is very nearly computable relative to an oracle for $M$. At the end of this subsection, we will explain why this is useful.

The main idea (of computing the satisfaction relation) is that to check whether $M \models \exists x\, \phi(\bar{a}, x)$, we don't need to try plugging in every element of $M$ for $x$, just those elements which are close to some coordinate of $\bar{a}$ (or to $0$), plus one element of each possible $\rad(\phi)$-neighborhood type which is far from all the coordinates of $\bar{a}$. In other words, checking the truth of an existential formula can be reduced to checking the truth of a finite number of atomic formulas. This intuition is formalized by the next proposition, whose proof essentially just consists of this idea, but with a number of messy details in order to make precise the idea of trying all the different $\rad(\phi)$-neighborhood types which are far from elements of $\bar{a}$.

\begin{proposition}[Satisfaction algorithm for existential formulas]
\label{prop:satisfaction1}
Suppose $\phi(\bar{x})$ is an existential $\L'$-formula with radius $r$. There is an algorithm which, given a tuple $\bar{a}$ in $M$ and the following data
\begin{enumerate}
    \item an oracle for $M$
    \item and a finite set $U \subseteq M$,
\end{enumerate}
tries to check whether $M \models \phi(\bar{a})$. Furthermore, if $U$ contains the $r$-neighborhood of every coordinate of $\bar{a}$ then the output of the algorithm is correct.
\end{proposition}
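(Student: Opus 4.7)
The plan is to reduce the problem of deciding $M \models \phi(\bar{a})$ to a search over finitely many candidate $r$-types via the indiscernability principle. Write $\phi(\bar{x})$ in prenex form as $\exists \bar{y}\, \psi(\bar{x}, \bar{y})$ with $\psi$ quantifier-free, and let $r = \rad(\phi)$. By the $\L'$-version of Proposition~\ref{prop:indiscernability}, the truth of $\psi(\bar{a}, \bar{y})$ depends only on the $r$-type of $\bar{a}\bar{y}$. Hence $M \models \phi(\bar{a})$ holds iff some $r$-type $\tau$ extending the $r$-type of $\bar{a}$ both makes $\psi$ true (a combinatorial check on $\tau$) and is realized in $M$ by some $\bar{y}$. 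Since there are only finitely many candidate $\tau$, the algorithm can enumerate them all.

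The algorithm will proceed in three stages. First, it uses $U$ together with the oracle to compute the $r$-type of $\bar{a}$: under the assumption that $U$ contains the $r$-neighborhood of every $a_i$, for each $i$ and each integer $|k| \leq r$ one can identify the element $a_i + \lit{k}$ within $U$ by iterating the successor/predecessor operation through its $\L'$-translation, evaluated via the oracle, and then read off the value of $A$ at that element. Second, the algorithm enumerates the finitely many candidate $r$-types $\tau$ for $\bar{a}\bar{y}$ extending the $r$-type of $\bar{a}$ and keeps only those for which $\psi$ holds according to the combinatorial check. Third, for each retained $\tau$, it tries to realize $\tau$ by producing a witness $\bar{y}$ coordinate by coordinate: a coordinate that $\tau$ declares to lie within distance $r$ of an already-located point is located directly---in $U$ when close to $\bar{a}$, or from an earlier $y_j$ otherwise---while a coordinate that $\tau$ declares far from everything with specified $r$-neighborhood type $\nu$ is searched for by enumerating elements of $M$ and computing their $r$-neighborhood types via the oracle.

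The main obstacle is to bound the search for a far coordinate of type $\nu$ so that the algorithm terminates even when no such coordinate exists in $M$. Here the plan is to adapt the pigeonhole argument from Case~2 of the proof of Proposition~\ref{prop:indiscernability}: if no element of $M$ with $r$-neighborhood type $\nu$ is far from the set $F$ of already-located points, then every element of $M$ with type $\nu$ must lie in the $r$-neighborhood of $F$, giving an explicit bound of at most $(2r+1)|F|$ on the total number of type-$\nu$ elements in $M$. Consequently, the search can be cut off as soon as the number of enumerated type-$\nu$ elements strictly exceeds the number found in the (finite, directly enumerable) $r$-neighborhood of $F$---any additional one must lie outside, giving the desired far witness. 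Once this termination is established, correctness is immediate from the indiscernability principle, since any witness to $\phi(\bar{a})$ realizes some candidate $r$-type $\tau$ that the algorithm considers, and conversely any $\tau$ the algorithm successfully realizes yields a genuine witness.
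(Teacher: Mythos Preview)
Your approach has a circularity problem. The oracle is for $M$ as an $\L'$-structure: it lets you evaluate atomic (hence quantifier-free) $\L'$-formulas, nothing more. But in stage~1 you propose to identify $a_i + \lit{k}$ inside $U$ ``by iterating the successor/predecessor operation through its $\L'$-translation,'' and in stage~3 you propose to compute the $r$-neighborhood type of freshly enumerated elements of $M$. Both tasks require you to evaluate the $\L'$-definitions $\phi_S$, $\phi_P$, $\phi_A$ of the $\L$-symbols $S$, $P$, $A$, and these $\L'$-formulas need not be quantifier-free. Evaluating quantified $\L'$-formulas is precisely what this proposition is supposed to deliver, so stages~1 and~3 presuppose the very subroutine you are trying to build. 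The same circularity reappears in your termination argument for the far search: deciding whether an enumerated element lies in the $r$-neighborhood of $F$, or even whether it has $r$-neighborhood type $\nu$, already requires knowing successors.

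The paper's proof avoids this by never computing $r$-types inside the algorithm. It hard-codes, non-uniformly in $M$ and $\phi$, a finite set $V' \subseteq M$ that already contains enough representatives of every realized $r$-neighborhood type (together with their $r$-neighborhoods and the $r$-neighborhood of $0$). The algorithm then simply evaluates the quantifier-free body $\theta(\bar{a}, \bar{b})$ for every tuple $\bar{b}$ drawn from $U \cup V'$, which the oracle can do directly. All of the $r$-type machinery and the indiscernability principle appear only in the \emph{correctness} proof, where one shows that if any witness exists then one can be found inside $U \cup V'$. Your pigeonhole observation is essentially the same idea that governs the choice of $V$, but the paper bakes that finite data into the algorithm in advance rather than trying to recover it at run time from the oracle.
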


\begin{proof}
Let $\theta(\bar{x}, \bar{y})$ be a quantifier free formula such that $\phi(\bar{x}) = \exists\bar{y}\, \theta(\bar{x}, \bar{y})$ and let $n = |\bar{x}|$ and $m = |\bar{y}|$ (i.e.\ the number of free and bound variables in $\phi$, respectively). Next, fix a finite set $V$ such that for each possible $r$-neighborhood type $p$, $V$ contains at least $(2r + 1)(n + m + 1)$ points of type $p$ (or if fewer than $(2r + 1)(n + m + 1)$ points have $r$-neighborhood type $p$ then $V$ contains every such point).\footnote{The idea is that this number is big enough that if we have any other $n + m + 1$ points then at least one point in $V$ which is of $r$-neighborhood type $p$ will be distance more than $r$ from all these $n + m + 1$ points.} Also $V$ should contain $0$. Let $V'$ be the set consisting of all elements within distance $r$ of some element of $V$. Note that since $V'$ is finite, we can ``hard-code'' it into our algorithm.

\medskip\noindent\textit{Algorithm description.} 
To check if $M \models \phi(\bar{a})$, look at each tuple $\bar{b}$ of elements of $U \cup V'$ and check if $M \models \theta(\bar{a}, \bar{b})$. If this occurs for at least one such $\bar{b}$ then output ``true.'' Otherwise, output ``false.'' Note that checking the truth of a quantifier free formula (such as $\theta$) is computable from an oracle for $M$.

\medskip\noindent\textit{Verification.}
Let's assume that $U$ contains the $r$-neighborhood of each coordinate of $\bar{a}$ and check that the output of the algorithm is correct. It is obvious that the algorithm has no false positives: if $M \models \theta(\bar{a}, \bar{b})$ for some $\bar{b}$ then $M \models \phi(\bar{a})$. Thus it suffices to assume that $M \models \phi(\bar{a})$ and show that there is some tuple $\bar{b}$ in $U\cup V'$ such that $M \models \theta(\bar{a}, \bar{b})$.

To accomplish this, we will pick elements of $\bar{b}$ one at a time and, at each step, ensure that all the elements we have picked so far come from the set $U \cup V'$. More precisely, we will pick elements $b_1,\ldots,b_m$ such that for each $i \leq m$,
\[
    M \models \exists y_{i + 1}\ldots \exists y_m \,\theta(\bar{a}, b_1,\ldots,b_i, y_{i + 1},\ldots,y_m)
\]
and we will try to ensure that for each $i$, $b_i \in U \cup V'$. However, in order to do this, we will need a somewhat stronger inductive assumption.

Let's first explain on an informal level how the induction works and why we need a stronger inductive assumption. On the first step of the induction, things work pretty well. It is possible to use the indiscernability principle to show that we can pick some $b_1$ which satisfies the condition above and which is close to some element of either $\bar{a}$ or $V$. Since $U$ contains a reasonably large neighborhood around each element of $\bar{a}$ and $V'$ contains a reasonably large neighborhood around each element of $V$, this means we can pick $b_1$ from $U\cup V'$. On the second step of the induction, however, things start to go wrong. We can again use the indiscernability principle to show that we can pick some $b_2$ which satisfies the condition above and which is close to either $b_1$ or to some element of either $\bar{a}$ or $V$. In the latter case, there is no problem: we can still pick $b_2$ from $U\cup V'$. But in the former case, there may be a problem. If the element $b_1$ we picked on the first step happens to be near the ``boundary'' of $U\cup V'$ then even a $b_2$ which is relatively close to it might no longer be inside $U\cup V'$.

We can fix this problem by requiring not just that $b_1$ is in $U\cup V'$, but also that it is far from the ``boundary'' of $U\cup V'$. In other words, we need to require that $b_1$ is close to $\bar{a}$ or $V$ in some stronger way than simply requiring that it be in $U\cup V'$. In fact, it is enough to require that $b_1$ be within distance $r/2$ of some element of $\bar{a}$ or $V$ and more generally, that each $b_i$ is within distance $r/2 + \ldots + r/2^i$ of some element of $\bar{a}$ or $V$.

To state this formally, we define sets $W_0 \subseteq W_1 \subseteq W_2 \subseteq \ldots \subseteq W_m$ as follows. $W_0$ consists of the coordinates of $\bar{a}$ together with the elements of $V$. For each $0 < i \leq m$, $W_i$ consists of all points in $M$ which are within distance $r/2^i$ of some element of $W_{i - 1}$ (note that this is equivalent to being within distance $r/2 + r/4 + \ldots + r/2^i$ of some element of $W_0$). Note that by assumption, $U \cup V'$ contains the $r$-neighborhood of each element of $W_0$. It follows that each $W_i$ is contained in $U \cup V'$

Also, define a sequence of formulas $\phi_0, \phi_1,\ldots,\phi_m$ by removing the quantifiers from $\phi$ one at a time. More precisely, define
\[
    \phi_i(\bar{x}, y_1,\ldots,y_i) := \exists y_{i + 1} \, \ldots, \exists y_m \theta(\bar{x}, \bar{y}).
\]
So, for example,
\begin{itemize}
    \item $\varphi_0(\bar{x}) = \exists y_1 \dots \exists y_m \theta(\bar{x},\bar{y})=\varphi(\bar{x})$
    \item $\varphi_1(\bar{x},y_1) = \exists y_2 \dots \exists y_m \theta(\bar{x},\bar{y})$
    \item $\varphi_2 (\bar{x},y_1,y_2) = \exists y_3 \dots \exists y_m \theta(\bar{x},\bar{y})$
    \item \dots 
    \item $\varphi_m (\bar{x},y_1,\dots,y_m) = \theta(\bar{x},\bar{y})$.
\end{itemize}

We will now inductively construct a sequence of points $b_1,\ldots,b_m$ such that for each $i$, $b_i \in W_i$ and $M \models \phi_i(\bar{a}, b_1,\ldots,b_i)$. Since $W_m \subseteq U \cup V'$ and $\phi_m = \theta$, this is sufficient to finish the proof.

The base case of this induction is simply the assertion that $M \models \phi(\bar{a})$ which we assumed above. Now assume that we have already found $b_1,\ldots,b_i$ and we will show how to find $b_{i + 1}$. Since $M \models \phi_i(\bar{a}, b_1,\ldots,b_i)$, there is some $c$ such that $M \models \phi_{i + 1}(\bar{a}, b_1,\ldots,b_i, c)$. The idea is that we can pick $b_{i + 1}$ by mimicking $c$. If $c$ is within distance $r/2^{i + 1}$ of some coordinate of $\bar{a}$, $0$ or some $b_j$ for $j \leq i$ then we set $b_{i + 1} = c$. Otherwise, we can pick $b_{i + 1}$ to be some element of $V$ with the same $r$-neighborhood type as $c$ and which is also distance at least $r/2^{i + 1}$ from all coordinates of $\bar{a}$, $0$ and all $b_j$. We can do this because either $V$ contains many points of that $r$-neighborhood type (more than all the points within distance $r/2^{i + 1}$ of $\bar{a}$, $0$ and $b_1,\ldots, b_i$---this is why we chose the number $(2r + 1)(n + m + 1)$) or there are not very many such points and $V$ contains $c$ itself. Note that in the first case, $b_{i + 1}$ is within distance $r/2^{i + 1}$ of some element of $W_i$, and in the second case, $b_{i + 1} \in V$. Thus in either case $b_{i + 1} \in W_{i + 1}$.

Also, note that in either case $\bar{a}b_1\ldots b_i c$ and $\bar{a}b_1\ldots b_i b_{i + 1}$ have the same $r/2^{i + 1}$-type. Since the radius of $\phi_{i + 1}$ can be seen to be $r/2^{i + 1}$ and $M \models \phi_{i + 1}(\bar{a}, b_1,\ldots,b_i, c)$, the indiscernability principle implies that $M \models \phi_{i + 1}(\bar{a},b_1,\ldots,b_{i + 1})$, as desired.
\end{proof}

We now want to give an algorithm to compute the satisfaction relation of an arbitrary formula. One way to do this is to recursively apply the idea of Proposition~\ref{prop:satisfaction1} to reduce checking the truth of a formula with an arbitrary number of quantifiers to checking the truth of a finite number of atomic formulas. However, if we invoke the quantifier elimination results of section~\ref{sec:qe_ma} then we can do something simpler. Recall that Theorem~\ref{thm:qe2} tells us every formula is equivalent over $M$ to a Boolean combination of existential formulas. Thus the algorithm for existential formulas almost immediately yields an algorithm for arbitrary formulas.

\begin{proposition}[Satisfaction algorithm for arbitrary formulas]
\label{prop:satisfaction}
Suppose $\phi(\bar{x})$ is an $\L'$-formula. There is a number $r \in \N$ and an algorithm which, given any tuple $\bar{a}$ in $M$ and the following data
\begin{enumerate}
    \item an oracle for $M$
    \item and a finite set $U \subseteq M$,
\end{enumerate}
tries to check whether $M \models \phi(\bar{a})$. Furthermore, if $U$ contains the $r$-neighborhood of every coordinate of $\bar{a}$ then the algorithm is correct.
\end{proposition}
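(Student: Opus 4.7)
The plan is to reduce arbitrary $\L'$-formulas to the existential case already handled by Proposition~\ref{prop:satisfaction1}, using Laskowski's quantifier elimination theorem (Theorem~\ref{thm:qe2}). First, I would invoke Theorem~\ref{thm:qe2} for $M$ viewed as an $\L'$-structure---the fact that $M$ is mutually algebraic as an $\L'$-structure was established at the end of the preceding subsection---to obtain an equivalence of the form
\[
  M \models \phi(\bar{x}) \leftrightarrow B(\psi_1(\bar{y}_1),\ldots,\psi_k(\bar{y}_k)),
\]
where $B$ is a Boolean combination and each $\psi_i$ is an existential $\L'$-formula $\exists \bar{z}\, \theta_i(\bar{y}_i, \bar{z})$ with $\theta_i$ quantifier free, $\bar{y}_i \subseteq \bar{x}$, and with parameters possibly drawn from $M$.

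Second, for each $i$ I would apply Proposition~\ref{prop:satisfaction1} to $\psi_i$ to obtain a radius $r_i$ and an algorithm $A_i$ which tries to check $M \models \psi_i(\bar{a})$ and returns the correct answer whenever $U$ contains the $r_i$-neighborhood of each coordinate of the tuple at which $\psi_i$ is evaluated (that is, of the free variables together with the parameters). Setting $r = \max_i r_i$, the overall algorithm would run each $A_i$ on $\bar{a}$ (with $U$ suitably augmented, see below), collect the resulting truth values $t_1,\ldots,t_k$, and output $B(t_1,\ldots,t_k)$. Correctness of the output then follows immediately from the correctness of each $A_i$, which is guaranteed whenever $U$ contains the $r$-neighborhood of every coordinate of $\bar{a}$.

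Two small technical points require a bit of care. (i) Each $\psi_i$ may contain parameters from $M$, but since there are only finitely many such parameters in total and they do not depend on the input $\bar{a}$, we can hard-code into the algorithm a finite set $C \subseteq M$ containing the $r$-neighborhood of each parameter, and replace $U$ by $U \cup C$ before invoking any $A_i$. (ii) Each $\psi_i$ has free variables $\bar{y}_i \subseteq \bar{x}$, but this causes no issue because the neighborhood condition required by $A_i$ on the coordinates of $\bar{y}_i$ is implied by the stronger hypothesis that $U$ contains the $r$-neighborhood of every coordinate of $\bar{x}$.

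Since the conceptual work is all done in Proposition~\ref{prop:satisfaction1} and Theorem~\ref{thm:qe2}, this proof is essentially a packaging argument and I do not anticipate any substantial obstacle; the most delicate bookkeeping item is handling the parameters cleanly, and the trick above (hard-coding $C$) takes care of that.
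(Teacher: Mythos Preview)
Your proposal is correct and follows essentially the same route as the paper: invoke Theorem~\ref{thm:qe2} to write $\phi$ as a Boolean combination of existential $\L'$-formulas $\psi_i$, apply Proposition~\ref{prop:satisfaction1} to each $\psi_i$, take $r$ to be the maximum of the resulting radii, and handle parameters by hard-coding their $r$-neighborhoods into the algorithm. The paper's proof is organized identically, including the same treatment of parameters (the paper calls the hard-coded set $V$ where you call it $C$).
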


\begin{definition}
For convenience, we will refer to the number $r$ in the statement of this proposition as the \term{satisfaction radius} of $\phi$.
\end{definition}

\begin{proof}
By Theorem~\ref{thm:qe2}, $\phi(\bar{x})$ is equivalent over $M$ to a Boolean combination of existential $\L'$-formulas, $\psi_1(\bar{x}),\ldots, \psi_m(\bar{x})$ (which may have parameters from $M$). Let $r_1,\ldots,r_m$ denote the radii of these formulas and let $r = \max(r_1,\ldots,r_m)$.

The algorithm is simple to describe, but is made slightly more complicated by the fact that the formulas $\psi_i$ may contain parameters from $M$. For clarity, we will first assume that they do not contain such parameters and then explain how to modify the algorithm in the case where they do.

Here's the algorithm (in the case where there are no parameters). For each $i \leq m$, use the algorithm for existential formulas and the set $U$ to check the truth of $\psi_i(\bar{a})$. Then assume all the reported truth values are correct and use them to compute the truth value of $\phi(\bar{a})$.

If $U$ contains an $r$-neighborhood around every coordinate of $\bar{a}$ then for each $i \leq m$, it contains an $r_i$-neighborhood around each coordinate of $\bar{a}$. So in this case, the truth values we compute for $\psi_1(\bar{a}), \ldots, \psi_m(\bar{a})$ are guaranteed to be correct and thus the final truth value for $\phi(\bar{a})$ is also correct.

Now suppose that the formulas $\psi_i$ contain parameters from $M$. Let $\bar{b}_i$ be the tuple of parameters of $\psi_i$. Let $V$ be the set containing the $r$-neighborhood of each element of each tuple of parameters $\bar{b}_i$. The only modification that is needed to the algorithm described above is that instead of using $U$ itself, we should use $U \cup V$ when applying the satisfaction algorithm for existential formulas (and note that since $V$ is finite, we can simply hard-code it into our algorithm).
\end{proof}


Here's why this algorithm is useful. Note that if we had some way of computably generating the set $U$ then we would be able to outright compute the satisfaction relation for $\phi$ using just an oracle for $M$. In turn, this would allow us to use an oracle for $M$ to compute the sequence $\tv{A(\lit{0})}{M}, \tv{A(\lit{1})}{M}, \tv{A(\lit{2})}{M},\ldots$, which is a path through $R$. Since $R$ has no computable paths, this would imply $M$ is not computable. Thus to finish our proof of the uncomputability of $M$, it is enough to find an algorithm for generating the set $U$ needed by the satisfaction algorithm. Actually, we can't quite do this in general, but we can do something almost as good: we can enumerate a short list of candidates for $U$. This is enough to show that the sequence $\tv{A(\lit{0})}{M}, \tv{A(\lit{1})}{M}, \tv{A(\lit{2})}{M},\ldots$ is guessable from an oracle for $M$. Since $R$ has no guessable paths, this is still enough to imply that $M$ is not computable.

\subsection{The guessing algorithm}

We will now prove that $M$ is not computable. As discussed above, we will do so by proving that the sequence $\tv{A(\lit{0})}{M}, \tv{A(\lit{1})}{M}, \tv{A(\lit{2})}{M}, \ldots$ is guessable relative to an oracle for $M$. Since the axioms of $T$ ensure that this sequence is a path through $R$ and since no path through $R$ is guessable, this implies that $M$ is not computable.

In other words, we can complete our proof by constructing an algorithm which, given an oracle for $M$ and a number $n$, enumerates a list of at most $O(n^2)$ guesses (at least one of which is correct) for the finite sequence $\llbracket A(\lit{0})\rrbracket^M, \llbracket A(\lit{1})\rrbracket^M, \ldots, \llbracket A(\lit{n}) \rrbracket^M$. 

The rest of this section is devoted to constructing this algorithm. Since it would become annoying to append the phrase ``relative to an oracle for $M$'' to every other sentence that follows, we will adopt the convention that we always implicitly have access to an oracle for $M$, even if we do not say so explicitly. Thus whenever we say that something is computable or computably enumerable, we mean relative to an oracle for $M$.



\medskip\noindent\textbf{Warm-up: when \boldmath$S$ has a quantifier free definition.}
We will begin by constructing an algorithm for one especially simple case. Note that this case is included only to demonstrate how the satisfaction algorithm can be used and to motivate the rest of the proof; it can be skipped without missing any essential details.


The ``especially simple case'' we are referring to is the case in which $S$ has a quantifier free $\L'$-definition. We will see that in this case, the sequence $\llbracket A(\lit{0})\rrbracket^M, \llbracket A(\lit{1})\rrbracket^M, \ldots $ is not only guessable, but actually computable.


To begin, let $\phi_S(x, y)$ be the $\L'$-definition of $S$---i.e.\ for every $a, b \in M$, $M \models S(a) = b$ if and only if $M \models \phi_S(a, b)$. Note that since $\phi_S$ is quantifier-free, the successor function in $M$ is computable: to find $S(a)$ we can just enumerate elements of $M$ until we see an element $b$ such that $M \models \phi_S(a, b)$ (which we can check because $\phi_S$ is quantifier-free). Likewise, we can also compute the predecessor function: instead of waiting for an element $b$ such that $M \models \phi_S(a, b)$, we wait for an element $b$ such that $M \models \phi_S(b, a)$.

We can now explain how to compute $\llbracket A(\lit{0})\rrbracket^M, \llbracket A(\lit{1})\rrbracket^M, \ldots $. Let $\phi_A(x)$ be the $\L'$-definition of $A$ and let $r$ be the satisfaction radius of $\phi_A$. Given a number $n$, do the following.
\begin{enumerate}
    \item First use the fact that the successor function is computable to compute $\lit{n} = S^n(0)$.
    \item Next, use the fact that the successor and predecessor functions are computable to compute the $r$-neighborhood of $\lit{n}$. Let $U$ denote the set of elements in this $r$-neighborhood.
    \item Finally, use the satisfaction algorithm for $\phi_A$, along with the set $U$, to check whether $M \models \phi_A(\lit{n})$ and output the result as the truth value of $A(\lit{n})$. Note that since $U$ really does contain the $r$-neighborhood of $\lit{n}$, the outcome of this step is guaranteed to be correct.
\end{enumerate}

\medskip\noindent\textbf{Idea of the full algorithm.}
We have just seen an algorithm that computes the sequence $\llbracket A(\lit{0})\rrbracket^M, \llbracket A(\lit{1})\rrbracket^M, \ldots $ (without needing to make guesses) in the special case where $S$ is definable by a quantifier-free $\L'$-formula. We can no longer assume that there is a quantifier-free definition of $S$, but by applying the quantifier elimination theorem for mutually algebraic formulas over mutually algebraic structures from section~\ref{sec:qe_ma}, we have something almost as good. Namely, let $\phi_S(x, y)$ be the $\L'$-definition of $S$. It is easy to see that $\phi_S$ is mutually algebraic and so, by Theorem~\ref{thm:qe1}, there is a mutually algebraic existential formula $\psi_S(x, y)$ (possibly with parameters from $M$) such that $M \models \phi_S(x, y) \to \psi_S(x, y)$. 

The formula $\psi_S(x, y)$ should be thought of as an ``approximation'' to the successor relation in $M$. In particular, for a fixed element $a$, any $b$ such that $M \models \psi_S(a, b)$ holds should be thought of as a candidate for $S(a)$ and any $b$ such that $M \models \psi_S(b, a)$ holds should be thought of as a candidate for $P(a)$. This is justified by the following two facts.
\begin{enumerate}
    \item Since $M \models \phi_S(x, y) \to \psi_S(x, y)$, we have $M \models \psi_S(a, S(a))$ and $M \models \psi_S(P(a), a)$. In other words, the candidates for the successor and predecessor of $a$ include the true successor and predecessor of $a$, respectively.
    \item Since $\psi_S$ is mutually algebraic, there are not very many such candidates.
\end{enumerate}
The core idea of the algorithm is that since $\psi_S(x, y)$ is existential, the set of candidates for $S(a)$ and $P(a)$ is computably enumerable: to check if $M \models \psi_S(a, b)$, we simply wait until we see some tuple in $M$ which can serve as a witness. Thus we have an algorithm which, given any $a \in M$, enumerates a short list of candidates for $S(a)$ and $P(a)$.

Next, we can bootstrap this into an algorithm which, for any $a \in M$ and any number $n \in \N$, enumerates a list of guesses for the sequence $a - \lit{n}, a - \lit{(n - 1)}, \ldots, a + \lit{n}$: basically, enumerate guesses for the successor and predecessor of $a$, then enumerate guesses for the successor and predecessor of each of those guesses and so on, for $n$ rounds. This puts us in a situation much like the previous subsection (where the successor and predecessor functions were computable). In particular, we can enumerate guesses for the sequence $\llbracket A(\lit{0})\rrbracket^M, \ldots, \llbracket A(\lit{n})\rrbracket^M$ as follows.
\begin{enumerate}
    \item First, let $\phi_A(x)$ be the $\L'$-definition of $A$ and let $r_A$ be the satisfaction radius of $\phi_A$.
    \item Given a number $n$, enumerate guesses for the sequence $\lit{-r_A}, \ldots, \lit{n + r_A}$.
    \item For each such guess, use the satisfaction algorithm to compute a guess for the sequence $\llbracket A(\lit{0})\rrbracket^M, \ldots, \llbracket A(\lit{n})\rrbracket^M$. 
\end{enumerate}
Note that if the guess from the second step is correct then the guess from the last step will be too because in this case we have correctly identified $\lit{0}, \ldots, \lit{n}$, along with the $r_A$-neighborhood of each one.

There is only one problem with this algorithm: we may enumerate too many guesses. Suppose that our algorithm for enumerating guesses for the successor of an element of $M$ enumerates $k$ guesses. Then it seems that we might end up enumerating up to $k^n$ guesses for $a + \lit{n}$: $k$ guesses for $a + \lit{1}$, $k^2$ guesses for $a + \lit{2}$ (since each guess for $a + \lit{1}$ gives rise to $k$ guesses for $a + \lit{2}$), and so on. Thus in the algorithm above, we might end up with about $k^n$ guesses for the sequence $\llbracket A(\lit{0})\rrbracket^M, \ldots, \llbracket A(\lit{n})\rrbracket^M$, which is not enough to show that the sequence $\llbracket A(\lit{0})\rrbracket^M, \llbracket A(\lit{1})\rrbracket^M, \ldots $ is guessable.

The second key idea of our algorithm is that we actually don't end up with so many guesses. It is possible to show that since $\psi_S$ is mutually algebraic, if $M \models \psi_S(a, b)$ then---almost always---$a$ and $b$ are close to each other. In particular, if the radius of $\psi_S$ is $r$ then with only finitely many exceptions, $a$ and $b$ must be distance at most $r$ apart (this will be proved in Lemma~\ref{lemma:ma_close} below). If we ignore the finitely many exceptions, then this implies that for any $a$, every candidate for $S(a)$ is within distance $r$ of $a$. By induction, this implies that every candidate for $a + \lit{n}$ is within distance $rn$ of $a$. The point is that this means there are at most $rn$ such candidates (rather than $k^n$).

This does not quite solve our problem: even if there are only $rn$ candidates for $a + \lit{n}$, there could still be exponentially many candidates for the sequence $a - \lit{n}, \ldots, a + \lit{n}$. However, it can be combined with other tricks to reduce the number of guesses to $O(n^2)$. This will be explained in detail in the proof of Lemma~\ref{lemma:alg2}.

\medskip\noindent\textbf{Details of the algorithm.}
We will now describe the details of the algorithm and verify that it works correctly. We will break the algorithm (and its verification) into three parts, which work as follows.
\begin{enumerate}
    \item \textbf{The successor and predecessor guessing algorithm:} an algorithm which takes as input an element $a \in M$ and uses the existential formula approximating the successor relation to enumerate candidates for the successor and predecessor of $a$. This is described in Lemma~\ref{lemma:alg1}.
    \item \textbf{The neighborhood guessing algorithm:} an algorithm which takes as input an element $a \in M$ and a number $n$ and uses the ideas discussed above to enumerate candidates for the sequence $a - \lit{n},\ldots,a + \lit{n}$. This is described in Lemma~\ref{lemma:alg2}.
    \item \textbf{The \boldmath$A$ guessing algorithm:} an algorithm which takes as input a number $n$ and uses the neighborhood guessing algorithm together with the satisfaction algorithm to enumerate candidates for the sequence $\llbracket A(\lit{0})\rrbracket^M, \ldots, \llbracket A(\lit{n})\rrbracket^M$. This is described in Lemma~\ref{lemma:alg3}.
\end{enumerate}
Before describing these algorithms and proving their correctness, we need to prove one technical lemma (which is related to our comment above stating that if $M \models \psi_S(a, b)$ then $a$ and $b$ are usually close together).

\begin{lemma}
\label{lemma:ma_close}
Suppose that $\phi(x, y)$ is a formula (possibly with parameters from $M$) of radius $r$ which is mutually algebraic over $M$. There is a finite set $X$ of elements of $M$ such that if $M \models \phi(a, b)$ then either $a$ and $b$ are distance at most $r$ apart or at least one of $a, b$ is in $X$.\footnote{Note that if $T$ is modified in the way described in Remark~\ref{remark:generic} then both the statement and proof of this lemma can be simplified somewhat. In particular, we can replace the set $X$ with the $r$-neighborhood of $0$.}
\end{lemma}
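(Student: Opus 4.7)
The plan is to exploit indiscernability and mutual algebraicity together. The key idea: if $\phi(a,b)$ holds with $d(a,b)>r$, and both $a$ and $b$ are far from the parameters $\bar{c}$ of $\phi$ and from $0$, then the $r$-type of the tuple $(a,b,\bar{c})$ depends only on the individual $r$-neighborhood types of its coordinates, because every distance-table entry involving $a$ or involving $b$ is $\infty$. So for any $a'$ with the same $r$-neighborhood type as $a$ that is also far from $0$, $\bar{c}$, and $b$, the tuples $(a,b,\bar{c})$ and $(a',b,\bar{c})$ have the same $r$-type, and hence by indiscernability $\phi(a',b)$ also holds. If the $r$-neighborhood type of $a$ is realized infinitely often, this yields infinitely many $a'$ with $\phi(a',b)$, contradicting mutual algebraicity under the partition $(\{x\},\{y\})$.

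Concretely, I would first pass from $\phi(x,y)$ with parameters $\bar{c}$ to the parameter-free formula $\phi(x,y,\bar{z})$ evaluated at $\bar{z}=\bar{c}$, noting that this does not change the radius (radius only counts occurrences of $S$ and $P$, plus doublings per quantifier), so the indiscernability principle (Proposition~\ref{prop:indiscernability} together with its $\L'$-corollary) applies with the same $r$. I would then take $X$ to be the union of (i) the $r$-neighborhood of $0$ together with the $r$-neighborhoods of the coordinates of $\bar{c}$, and (ii) the set of elements of $M$ whose $r$-neighborhood type is realized only finitely often. Set (i) is a finite union of finite sets, and set (ii) is a finite union of finite sets because there are only $2^{2r+1}$ possible $r$-neighborhood types; hence $X$ is finite.

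It then suffices to check that if $\phi(a,b)$ holds with $d(a,b)>r$ and neither $a$ nor $b$ lies in $X$, we reach a contradiction. Since $a\notin X$, its $r$-neighborhood type is realized infinitely often, so I can pick an infinite family of candidates $a'$ sharing that $r$-neighborhood type, and discard the finitely many lying within distance $r$ of $0$, $\bar{c}$, $a$, or $b$. The remaining infinitely many $a'$ all satisfy $\phi(a',b)$ by the indiscernability argument above, contradicting mutual algebraicity of $\phi$. The only nontrivial bookkeeping is confirming that the $r$-type comparison between $(a,b,\bar{c})$ and $(a',b,\bar{c})$ genuinely goes through; this is straightforward because both tuples have identical $r$-neighborhood types at each coordinate and have $\infty$ in every distance-table entry involving the first coordinate. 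This is the main step to execute carefully, but it is essentially combinatorial rather than conceptually hard.
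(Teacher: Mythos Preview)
Your proposal is correct and follows essentially the same strategy as the paper: expose the parameters, use the indiscernability principle to show that any $a'$ with the same $r$-neighborhood type as $a$ (and far from $b$, $\bar c$, and $0$) also satisfies $\phi(a',b)$, and then invoke mutual algebraicity to force such $r$-neighborhood types to be rare. The only differences are cosmetic: you define $X$ up front (as the $r$-neighborhoods of $0$ and $\bar c$ together with all elements of finitely-realized $r$-neighborhood type) and argue by contradiction, whereas the paper defines ``exceptional pairs'' first, proves their components have rare $r$-neighborhood type, and then assembles $X$ afterward. Your explicit inclusion of the $r$-neighborhood of $0$ is in fact slightly more careful than the paper's write-up, which leaves that point implicit.
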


\begin{proof}
It will help to first make explicit the parameters of $\phi$. Let $\bar{c}$ denote the tuple of parameters and write $\phi'(x, y, \bar{z})$ to denote the version of $\phi$ with the parameters exposed, i.e.\ $\phi(x, y)$ is $\phi'(x, y, \bar{c})$.

Call a pair $(a, b)$ \term{exceptional} if $a$ and $b$ are more than distance $r$ apart and both are more than distance $r$ from every coordinate of $\bar{c}$ and $M \models \phi(a, b)$. We will show that if $(a, b)$ is exceptional then the $r$-neighborhood type of $a$ occurs only finitely often in $M$, and likewise for $b$. Since there are only finitely many $r$-neighborhood types, this shows that there are only finitely many exceptional pairs. This is sufficient to finish the proof since we can take $X$ to consist of all elements which are part of some exceptional pair, together with the $r$-neighborhood of each coordinate of $\bar{c}$.

The claim about exceptional pairs follows from the indiscernability principle. Suppose $(a, b)$ is an exceptional pair. If $a'$ is any element of $M$ which is distance more than $r$ from all of $b$ and from every coordinate of $\bar{c}$ and which has the same $r$-neighborhood type as $a$ then by the indiscernability principle we have
\[
    M \models \phi(a, b) \implies M \models \phi'(a, b, \bar{c}) \implies M \models \phi'(a', b, \bar{c})
\]
and hence $M \models \phi(a', b)$. Since $\phi$ is mutually algebraic, there can only be finitely many such $a'$. Thus, outside of the $r$-neighborhood of $b$ and of each coordinate of $\bar{c}$, there are only finitely many elements with the same $r$-neighborhood type as $a$. Since these $r$-neighborhoods are themselves finite, they also contain only finitely many elements with the same $r$-neighborhood type as $a$ and thus we have shown that the $r$-neighborhood type of $a$ only occurs finitely often in $M$. Symmetric reasoning establishes the same result for $b$.
\end{proof}

\begin{lemma}[Guessing algorithm for successors and predecessors]
\label{lemma:alg1}
There is an algorithm which, given any $a \in M$ enumerates two lists of elements of $M$ such that
\begin{enumerate}
    \item $S(a)$ is in the first list and $P(a)$ is in the second list.
    \item There is a constant upper bound (independent of $a$) on the distance between any enumerated element and $a$.
\end{enumerate}
\end{lemma}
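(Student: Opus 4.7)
The plan is to exploit the fact that the $\L'$-definition of the graph of $S$ is mutually algebraic over $M$, so that Theorem~\ref{thm:qe1} provides an existential mutually algebraic formula that approximates $S$ from above. Enumerating witnesses for this existential formula will produce a list of candidates for $S(a)$ (and symmetrically for $P(a)$), and Lemma~\ref{lemma:ma_close} will force every such candidate, outside a finite exceptional set, to lie at bounded distance from $a$.

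In detail, I would first let $\phi_S(x, y)$ denote the $\L'$-definition of the graph of $S$. This formula is mutually algebraic over $M$ for the trivial reason that $S$ is a bijection in any model of $T$: for each fixed $a$ there is a unique $b$ with $M \models \phi_S(a, b)$, and for each fixed $b$ there is a unique such $a$. Theorem~\ref{thm:qe1} then yields an existential $\L'$-formula $\psi_S(x, y)$, possibly with parameters from $M$, that is mutually algebraic over $M$ and satisfies $M \models \phi_S(x, y) \to \psi_S(x, y)$. Let $r \geq 1$ be its radius. Since $\psi_S$ is existential, both $\{b : M \models \psi_S(a, b)\}$ and $\{b : M \models \psi_S(b, a)\}$ are enumerable from an oracle for $M$ uniformly in $a$, and the implication above guarantees that these sets contain $S(a)$ and $P(a)$ respectively.

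The remaining task is to trim these lists so that every element is within bounded distance of $a$. Lemma~\ref{lemma:ma_close} applied to $\psi_S$ furnishes a finite set $X \subseteq M$ such that any pair satisfying $\psi_S$ either has distance at most $r$ or has at least one coordinate in $X$. The naive idea is to discard anything enumerated that happens to lie in $X$, but this could inadvertently delete $S(a)$ itself when $S(a) \in X$. I would fix this by thickening $X$ to the finite set $X'$ consisting of $X$ together with its $r$-neighborhood. Then if $a \notin X'$ we automatically have $S(a), P(a) \notin X$, since otherwise $a$ would be within distance $1 \leq r$ of $X$ and hence in $X'$. So for such $a$ the filter is safe, and any surviving candidate $b$ has both coordinates outside $X$ and therefore satisfies $d(a, b) \leq r$ by the lemma.

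The algorithm is now straightforward to assemble. Hard-code $X'$ together with the correct values of $S$ and $P$ on $X'$ into the algorithm, which is permitted since the algorithm may depend on $M$. On input $a$: if $a \in X'$, output the hard-coded pair; otherwise, enumerate the two existential sets above and discard any element that lies in $X$. I expect the only conceptually nontrivial point to be noticing that the straightforward filter can destroy the correct answer when it sits inside the exceptional set, and resolving this by replacing $X$ with its $r$-neighborhood; everything else is a matter of unpacking definitions.
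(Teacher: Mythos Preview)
Your proposal is correct and follows essentially the same approach as the paper: obtain the existential mutually algebraic $\psi_S$ from Theorem~\ref{thm:qe1}, enumerate witnesses, and use the finite exceptional set $X$ from Lemma~\ref{lemma:ma_close} together with hard-coded successor/predecessor data to ensure the bounded-distance condition. The only difference is organizational: you thicken $X$ to its $r$-neighborhood $X'$ and branch up front on whether $a\in X'$, whereas the paper keeps $X$ itself and, during enumeration, checks whether either coordinate lies in $X$ and in that case consults the hard-coded data---both are straightforward ways to avoid discarding the genuine successor or predecessor.
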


\begin{proof}
Let $\phi_S(x, y)$ be the $\L'$-definition of $S$ (i.e.\ $M \models S(a) = b$ if and only if $M \models \phi_S(a, b)$). Since $\phi_S(x, y)$ is mutually algebraic, we can apply Theorem~\ref{thm:qe1} to obtain a mutually algebraic existential $\L'$-formula $\psi_S(x, y)$ (which may contain parameters from $M$) such that $M \models \phi_S(x, y) \to \psi_S(x, y)$. Let $r$ be the radius of $\psi_S$. By Lemma~\ref{lemma:ma_close}, there is a finite set $X$ such that if $M \models \psi_S(b, c)$ then either $b$ and $c$ are distance at most $r$ apart or at least one of $b, c$ is in $X$. We will hard-code into our algorithm the elements of $X$, along with the identity of their successors and predecessors.

Note that since $\psi_S(x, y)$ is an existential formula, it follows that for a fixed $a$, the set of elements $b$ such that $M \models \psi_S(a, b)$ is computably enumerable (to see why, note that we can simply enumerate tuples in $M$ until we find one that witnesses the existential formula $\psi_S(a, b)$), and likewise for the set of elements $b$ such that $M \models \psi_S(b, a)$. Thus our algorithm may work as follows.
\begin{enumerate}
    \item Begin enumerating elements $b$ such that $M \models \psi_S(a, b)$ or $M \models \psi_S(b, a)$. 
    \item For each element $b$ such that $M \models \psi_S(a, b)$, check if either $a$ or $b$ is in $X$. If so, use the hard-coded list of successors and predecessors of elements of $X$ to check if $b$ is a successor of $a$. If this is true, enumerate $b$ into the first list. If $a$ and $b$ are both not in $X$ then enumerate $b$ into the first list with no extra checks.
    \item Do the same thing for each element $b$ such that $M \models \psi_S(b, a)$, but enumerate $b$ into the second list instead of the first.
\end{enumerate}
Since $M \models \phi_S(x, y) \to \psi_S(x, y)$, the true successor and predecessor of $a$ will be successfully enumerated. Also, if $b$ is some element of $M$ which is distance more than $r$ from $a$ then either $M \notmodels \psi_S(a, b)$ and $M \notmodels \psi_S(b, a)$, in which case $b$ will not be enumerated, or one of $a, b$ is in $X$, in which case $b$ will still not be enumerated (because it is not a true successor or predecessor of $a$). 
\end{proof}

\begin{lemma}[Guessing algorithm for neighborhoods]
\label{lemma:alg2}
There is an algorithm which, given any $a \in M$ and number $n \in \N$, enumerates a list of at most $O(n^2)$ guesses for the sequence $a - \lit{n}, \ldots, a + \lit{n}$, one of which is correct.
\end{lemma}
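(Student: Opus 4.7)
The plan is to iterate the algorithm of Lemma~\ref{lemma:alg1}, use Lemma~\ref{lemma:ma_close} to control the spread of candidates along the chain, and then invoke the satisfaction algorithm (Proposition~\ref{prop:satisfaction}) to pin down the true sequence among what would otherwise be exponentially many possibilities.

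First I would fix the successor and predecessor guessing procedures $\sigma, \pi$ provided by Lemma~\ref{lemma:alg1}, together with a uniform constant $r_0$ bounding the distance between any enumerated candidate and its input along the chain. Define $C^+_i = \sigma^i(a)$ and $C^-_i = \pi^i(a)$ for $0 \le i \le n$, computed by repeatedly applying $\sigma$ (or $\pi$) to all previously enumerated candidates. A straightforward induction on $i$ using the $r_0$ bound shows $C^+_i \subseteq \{a + \lit{k} : |k| \le r_0 i\}$ and likewise for $C^-_i$, so $|C^\pm_i| \le 2 r_0 i + 1 = O(i)$. Because $S(b) \in \sigma(b)$ and $P(b) \in \pi(b)$, each $C^+_i$ contains the true value $a + \lit{i}$ and each $C^-_i$ contains $a - \lit{i}$; in particular there are only $O(n)$ candidates for each of the two endpoints $a \pm \lit{n}$.

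Enumerating all tuples $(b_{-n}, \ldots, b_n)$ compatible with $\sigma, \pi$ at each step would produce exponentially many sequences, so instead the algorithm enumerates only the $O(n) \cdot O(n) = O(n^2)$ endpoint pairs $(c^-, c^+) \in C^-_n \times C^+_n$ and outputs a single canonical sequence per pair, designed so that the correct pair $(a - \lit{n}, a + \lit{n})$ yields the true sequence. For the canonical construction I would invoke Proposition~\ref{prop:satisfaction} applied to $\phi_S$, the $\mathcal{L}'$-definition of $S$; let $r_S$ be its satisfaction radius. Enlarge $C^+_n \cup C^-_n$ to a finite set $U$ by iterating $\sigma$ and $\pi$ for $r_S$ further rounds starting from these endpoints; the spread bound ensures that $U$ contains the true $r_S$-neighborhood of every candidate. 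Consequently the satisfaction algorithm with input $U$ correctly decides $M \models \phi_S(b, b')$ for every pair of candidates $b, b'$, which recovers the true successor relation restricted to the candidate set. Given this, each endpoint pair $(c^-, c^+)$ determines at most one sequence, namely the chain $c^-, S(c^-), \ldots, S^{2n}(c^-)$ if it has length exactly $2n + 1$, passes through $a$ at position $n$, and ends at $c^+$; otherwise the pair is discarded.

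The main obstacle will be the bookkeeping to justify that $U$ actually contains the required $r_S$-neighborhoods and that the inductive spread estimate $C^{\pm}_i \subseteq \{a + \lit{k} : |k| \le r_0 i\}$ is valid once we properly account for the finitely many exceptions permitted by Lemma~\ref{lemma:ma_close}. This amounts to hard-coding those exceptions into the algorithm (as was already done in the proof of Lemma~\ref{lemma:alg1}) and tracking the interaction between $r_0$, $r_S$, and $n$; once this is done, correctness of the canonical construction for the true endpoint pair follows directly from the correctness guarantee of Proposition~\ref{prop:satisfaction}.
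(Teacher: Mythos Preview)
Your high-level plan---iterate the successor/predecessor guesser, use the distance bound from Lemma~\ref{lemma:alg1} to keep the candidate sets linear in $n$, and then invoke the satisfaction algorithm for $\phi_S$ to recover the true chain---matches the paper. The gap is in how you extract an $O(n^2)$ family of guesses.

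You parametrize the guesses by endpoint pairs $(c^-, c^+) \in C^-_n \times C^+_n$, of which there are indeed $O(n^2)$. But your ``canonical construction'' for a given pair must run the satisfaction algorithm with a set $U$ containing the relevant $r_S$-neighborhoods, and this $U$ (like the $C^\pm_i$ themselves) is only computably \emph{enumerable} from the oracle, not computable: Lemma~\ref{lemma:alg1} provides no halting signal. If you feed Proposition~\ref{prop:satisfaction} a finite-stage approximation $U^s$, it gives no correctness guarantee, so even for the true endpoints the chain you reconstruct may be wrong; if instead you wait for the full $U$, you have no way to detect that it has stabilized. Knowing $a-\lit{n}$ and $a+\lit{n}$ does not tell you when the enumeration of the intermediate candidate sets is finished, so the endpoint pair is the wrong parameter to guess.

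The paper's $O(n^2)$ comes from guessing a \emph{stopping time} for the enumeration rather than anything about the chain itself. It enumerates candidate sets $U_{-N},\ldots,U_N$ (with $N = r_1 n + r_2$) in parallel and guesses the pair $(i,m)$: the index $i$ of the last $U_j$ to receive a new element and its final size $m$. Once $U_i$ is seen to reach size $m$, every $U_j$ is provably complete, and Phase~2 then computes the chain outright from $a$ via Proposition~\ref{prop:satisfaction}, searching only within the appropriate $U_{i\pm 1}$---no endpoint guessing is involved. There are $O(n)$ choices for $i$ and, by the spread bound, $O(n)$ for $m$, giving the $O(n^2)$ total. Replacing your endpoint parameter with this stopping-time parameter repairs the argument.
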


\begin{proof}
It is easiest to describe our algorithm in the following way. We will first describe an algorithm which has access to certain extra information (which might not be computable from an oracle for $M$) and which uses this extra information to correctly compute the sequence $a - \lit{n}, \ldots, a + \lit{n}$. We then obtain an algorithm for enumerating guesses for the sequence by trying each possible value of the extra information and running the algorithm on each of these values in parallel.\footnote{A slightly subtle point here is that the algorithm which uses extra information to compute $a - \lit{n}, \ldots, a + \lit{n}$ might not terminate if the extra information it is given is incorrect. Thus some of the possible values that we try for the extra information will never actually output a guess. This is why we only say that our final algorithm \emph{enumerates} a list of guesses rather than that it \emph{computes} a list of guesses.} To finish, we will have to show that there are only $O(n^2)$ possible values for the extra information. 

To begin, let $r_1$ be the constant from the statement of Lemma~\ref{lemma:alg1} (i.e.\ the upper bound on the distance between any $a$ and any element which is enumerated by the algorithm for guessing successors and predecessors of $a$). Let $\phi_S(x, y)$ be the $\L'$-definition of $S$ and let $r_2$ be the satisfaction radius of $\phi_S$.

Suppose we are given an element $a \in M$ and a number $n \in \N$ as input. Let $N = r_1n + r_2$. Our algorithm proceeds in two phases.
\begin{enumerate}
    \item In the first phase, we will use the algorithm from Lemma~\ref{lemma:alg1} to collect candidates for $a + \lit{i}$ for each $-N \leq i \leq N$. More precisely, for each such $i$ we will find a set $U_i$ which contains $a + \lit{i}$ and which is contained in the $r_1|i|$-neighborhood of $a$.
    \item In the second phase, we will use the sets of candidates collected in the first stage as input to the satisfaction algorithm (applied to $\phi_S$) to determine the exact identities of $a + \lit{i}$ for each $-n \leq i \leq n$.
\end{enumerate}

The ``extra information'' that we alluded to above is needed in the first phase of the algorithm. This is because the sets $U_i$ are not quite computable from an oracle for $M$, but only computably enumerable. However, since the they are all finite, it is possible to compute them exactly with only a small amount of additional information. Let $i$ be the index of the last $U_i$ to have a new element enumerated into it and let $m$ be the size of $U_i$ once all its elements have been enumerated (note that such an $i$ and $m$ exist because all the $U_j$ are finite). We claim that the pair $(i, m)$ is enough information to allow us to compute all the sets $U_j$ exactly and that there are only $O(n^2)$ possible values for this pair.

To see why we can compute all the $U_j$ exactly, note that given $i$ and $m$ we can simply keep enumerating elements into all the $U_j$ until we see that $U_i$ has size $m$. To see why there are only $O(n^2)$ possible values for the pair $(i, m)$, note that there are only $2N + 1$ possible values for $i$ and at most $r_1(2N + 1)$ possible values for $m$ (since $U_i$ is contained in the $r_1|i|$-neighborhood of $a$, which has $r_1(2|i| + 1) \leq r_1(2N + 1)$ elements). Thus there are at most $r_1(2N + 1)^2 = O(n^2)$ possible values for $(i, m)$.

\medskip\noindent\textit{Phase 1: collecting candidates.}
The sets $U_i$ for $-N \leq i \leq N$ can be enumerated as follows. To begin with, set $U_0 = \{a\}$ and set all other $U_i = \0$. Then run the following processes in parallel: for each $-N  < i < N$ and each element $b$ of $U_i$, use the algorithm of Lemma~\ref{lemma:alg1} to enumerate candidates for the successor and predecessor of $b$. If $i \geq 0$ then add each such candidate for the successor of $b$ to $U_{i + 1}$. If $i \leq 0$ then add each candidate for the predecessor of $b$ to $U_{i - 1}$. It is easy to show by induction that for each $i$, $a + \lit{i}$ will eventually be enumerated into $U_i$ and that each element enumerated into $U_i$ is distance at most $r_1|i|$ from $a$.

\medskip\noindent\textit{Phase 2: computing neighbors exactly.}
Given the sets $U_i$ from phase 1, we can compute the exact identities of $a - \lit{n}, \ldots, a + \lit{n}$ as follows. First, let $U = U_{-N} \cup \ldots \cup U_N$ and note that $a + \lit{0} = a$. Next, loop over $i = 0, 1, \ldots, n - 1$. On step $i$, we will compute $a + \lit{i + 1}$ and $a - \lit{(i + 1)}$. Suppose that we are on step $i$ of the algorithm and assume for induction that we have already successfully computed $a + \lit{i}$ and $a - \lit{i}$ (note that for $i = 0$ this is trivial). Now do the following:
\begin{enumerate}
    \item For each $b \in U_{i + 1}$, use the satisfaction algorithm (of Proposition~\ref{prop:satisfaction}) with the set $U$ to check if $M \models \phi_S(a + \lit{i}, b)$. 
    \item For each $b \in U_{-(i + 1)}$, use the satisfaction algorithm with the set $U$ to check if $M \models \phi_S(b, a - \lit{i})$.
\end{enumerate}
Note that each $b \in U_{i + 1}$ is within distance $r_1(i + 1)$ of $a$. Since $U$ contains the entire $N$-neighborhood of $a$ and $N = r_1n + r_2 \geq r_1 (i + 1) + r_2$, $U$ also contains the $r_2$-neighborhood of $b$. Thus the conditions of the satisfaction algorithm are fulfilled and so we correctly compute whether $b$ is the successor of $a + \lit{i}$ or not. And since $U_{i + 1}$ is guaranteed to contain $a + \lit{i + 1}$, our algorithm will correctly identify $a + \lit{i + 1}$. Completely symmetric reasoning applies to show that our algorithm will correctly identify $a - \lit{(i + 1)}$.
\end{proof}

\begin{lemma}[Guessing algorithm for $A$]
\label{lemma:alg3}
There is an algorithm which, given any number $n \in \N$, enumerates a list of at most $O(n^2)$ guesses for the sequence $\llbracket A(\lit{0})\rrbracket^M, \ldots, \llbracket A(\lit{n})\rrbracket^M$, one of which is correct.
\end{lemma}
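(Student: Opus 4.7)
The plan is to chain together Lemma \ref{lemma:alg2} (the neighborhood guessing algorithm) with Proposition \ref{prop:satisfaction} (the satisfaction algorithm) applied to the $\L'$-definition of $A$. Write $\phi_A(x)$ for this definition and let $r_A$ be its satisfaction radius. The key observation is that if we had a finite set $U$ containing the $r_A$-neighborhood of each $\lit{k}$ for $0 \leq k \leq n$, together with the identification of which element of $M$ is $\lit{k}$, then running the satisfaction algorithm for $\phi_A$ on each $\lit{k}$ with auxiliary set $U$ would recover $\llbracket A(\lit{k}) \rrbracket^M$ exactly. The strategy is thus to enumerate a short list of candidates for such a $U$ and run the satisfaction algorithm on each.

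Producing these candidates requires first locating $\lit{0}$. Since the $\L'$-definition $\phi_0(x)$ of the constant $0$ is a unary formula, it is automatically mutually algebraic over $M$, so Theorem \ref{thm:qe1} supplies an existential mutually algebraic formula $\psi_0(x)$ with $M \models \phi_0(x) \to \psi_0(x)$. The set of $b \in M$ satisfying $\psi_0$ is finite of constant size, is computably enumerable from an oracle for $M$, and contains $\lit{0}$. This yields $O(1)$ candidates for $\lit{0}$.

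For each such candidate $c$, invoke Lemma \ref{lemma:alg2} with input $c$ and parameter $N = n + r_A$ to enumerate $O((n+r_A)^2) = O(n^2)$ guesses for the sequence $c - \lit{N}, \ldots, c + \lit{N}$. For each such guess $(u_{-N}, \ldots, u_N)$, let $U = \{u_{-N}, \ldots, u_N\}$ and, for each $0 \leq k \leq n$, apply the satisfaction algorithm to $\phi_A$ on input $u_k$ with auxiliary set $U$. Concatenating the $n+1$ resulting truth values produces a candidate initial segment of the $A$-sequence, and the total number of such candidates is $O(1) \cdot O(n^2) = O(n^2)$.

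For correctness, when $c = \lit{0}$ and the neighborhood guess is correct, we have $u_k = \lit{k}$ for all $|k| \leq N$, and for each $0 \leq k \leq n$ the $r_A$-neighborhood of $\lit{k}$ lies inside $\{\lit{-N}, \ldots, \lit{N}\} = U$ because $N = n + r_A$. Proposition \ref{prop:satisfaction} then guarantees that every truth value we compute is correct, so the corresponding guess is the true initial segment. I do not anticipate any serious obstacle here: the argument is essentially a clean composition of the two preceding algorithms, and the only bookkeeping point is the identification of $\lit{0}$ via its unary $\L'$-definition, handled as above.
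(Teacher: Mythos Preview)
Your approach is correct and essentially matches the paper's: feed $0$ and $n + r_A$ into Lemma~\ref{lemma:alg2}, then run the satisfaction algorithm for $\phi_A$ on each coordinate of each guessed neighborhood. The one difference is your detour through $\psi_0$ to locate $\lit{0}$; this is harmless but unnecessary, since the paper simply hard-codes the single element $0 \in M$ into the algorithm (just as the other algorithms already hard-code non-uniform data such as $\psi_S$, the set $X$ of Lemma~\ref{lemma:ma_close}, and the set $V$ in Proposition~\ref{prop:satisfaction1}).
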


\begin{proof}
Let $\phi_A(x)$ be the $\L'$-definition of $A$ (i.e.\ $M \models A(a)$ if and only if $M \models \phi_A(x)$) and let $r$ be the satisfaction radius of $\phi_A$. This algorithm essentially just combines the algorithm for guessing neighborhoods with the satisfaction algorithm for $\phi_A$.

Given a number $n \in \N$ as input, first use the algorithm from Lemma~\ref{lemma:alg2} to enumerate guesses for the sequence $-\lit{(n + r)}, \ldots, \lit{n + r}$ (this can be done by simply giving the element $0 \in M$ and the number $n + r$ as input to that algorithm). Let $b_{-(n + r)}, \ldots, b_{n + r}$ be one such guess and let $U = \{b_i \mid -(n + r) \leq i \leq n + r\}$. For each $0 \leq i \leq n$, use the satisfaction algorithm with the set $U$ to check if $M \models \phi_A(b_i)$. If so, report that $\tv{A(\lit{i})}{M}$ is true and otherwise report that it is false.

So for each guess for the sequence $-\lit{(n + r)}, \ldots, \lit{n + r}$ we generate exactly one guess for the sequence $\llbracket A(\lit{0})\rrbracket^M, \ldots, \llbracket A(\lit{n})\rrbracket^M$ and thus we generate at most $O((n + r)^2) = O(n^2)$ guesses overall. Furthermore, one of the guesses for the sequence $-\lit{(n + r)}, \ldots, \lit{n + r}$ is guaranteed to be correct. For this guess, each $b_i$ is actually equal to $\lit{i}$ and for each $i \leq n$, the set $U$ really does contain the $r$-neighborhood of $b_i$. Thus, for this guess, each $\llbracket A(\lit{i}) \rrbracket^M$ is computed correctly.
\end{proof}

\section{Questions}

\subsection{Bi-interpretability}

Since definitional equivalence is a strong form of bi-interpretability, it seems reasonable to ask whether Theorem~\ref{thm:main} still holds when definitional equivalence is replaced with bi-interpretability. 

\begin{question}
Is there a consistent, c.e.\ theory such that no theory bi-interpretable with it has a computable model?
\end{question}

It seems possible that the theory $T$ we used in our proof of Theorem~\ref{thm:main} could also be used to answer this question, but there are a few difficulties. One issue is that while the mutual algebraicity of a structure is preserved under definitional equivalence, it is not always preserved under bi-interpretability. 

\begin{example}[Bi-interpretability fails to preserve mutual algebraicity]
Let $\L$ be the language with just equality and let $\L'$ be a language with two sorts $U$ and $V$, and two function symbols $f, g \colon V \to U$. Let $T$ be the $\L$-theory describing an infinite set and let $T'$ be the $\L'$-theory which states that $U$ is infinite and $(f, g)$ is a bijection from $V$ to $(U\times U) \setminus \{(x, x) \mid x \in U\}$.

Given a model of $T'$, we can obtain a model of $T$ by forgetting the sort $V$ and the functions $f$ and $g$. Given a model of $T$ we can obtain a model of $T'$ as follows. Take as the underlying set for the model, the set of all pairs $(x, y)$ with pairs of the form $(x, x)$ forming the sort $U$ and pairs of the form $(x, y)$ for $x\neq y$ forming the sort $V$. For the functions $f$ and $g$, simply take $f((x, y)) = (x, x)$ and $g((x, y)) = (y, y)$. 

It is not hard to check that these two interpretations give a bi-interpretation. However, while every model of $T$ is clearly mutually algebraic, the same is not true for $T'$. For example, the formula $f(y) = x$ is not equivalent to any Boolean combination of mutually algebraic formulas.
\end{example}

A second issue (not unrelated to the first) is that, in our proof, we relied on the fact that any model $M$ of a theory definitionally equivalent to $T$ carries a notion of distance inherited from $T$. In particular, we used this to bound the number of guesses required by the neighborhood guessing algorithm of Lemma~\ref{lemma:alg2}. However, if $M$ is only a model of a theory bi-interpretable with $T$, it is not clear if there is still a good notion of distance which can play this role.

\subsection{Natural theories}
Arguably, the theory $T$ that we used to prove Theorem~\ref{thm:main} is not very natural. It would be interesting to know if this is necessary.

\begin{question}
Is there a natural theory witnessing Theorem~\ref{thm:main}?
\end{question}

Of course, much depends on what the word ``natural'' means. In the interests of asking a somewhat more concrete question, let's say that a theory is natural if it has been studied (at least implicitly) by mathematicians who are not logicians.

We can rephrase our question as follows: is there any natural theory which has satisfies the robust version of the Tennenbaum property implicit in Theorem~\ref{thm:main}? In light of Pakhomov's results, which seem to show that any theory interpreting a decent amount of arithmetic is definitionally equivalent to a theory without the Tennenbaum property, it seems like a good idea to first ask whether any natural theory satisfies the regular version of the Tennenbaum property but does not interpret any nontrivial fragment of arithmetic. We are not aware of any such theory and would consider it highly interesting.

\begin{question}
Is there any natural (consistent) theory $T$ such that $T$ has no computable models and does not interpret any nontrivial fragment of arithmetic?
\end{question}

One can ask a similar question on the level of models rather than theories. In analogy with our definition for theories, let's say that a countable structure is natural if it has been studied by mathematicians who are not logicians. 

\begin{question}
Is there a natural countable structure with no computable presentation?
\end{question}

Again, we are not aware of any completely convincing example of such a structure and would consider any such example to be very interesting.

\bibliographystyle{alpha}
\bibliography{bibliography}

\end{document}